\theoremstyle{plain}
\newtheorem{theorem}{Theorem}[section]
\newtheorem{proposition}[theorem]{Proposition}
\newtheorem{lemma}[theorem]{Lemma}
\theoremstyle{definition}
\newtheorem{definition}[theorem]{Definition}
\newtheorem{rem}[theorem]{Remark}
\newtheorem{example}[theorem]{Example}
\DeclareMathOperator{\ext}{\textup{ext}}
\DeclareMathOperator{\inter}{\textup{int}}
\DeclareMathOperator{\Per}{\textup{Per}}
\DeclareMathOperator{\dist}{\textup{dist}}
\DeclareMathOperator{\diam}{\textup{diam}}
\DeclareMathOperator{\difm}{\textup{d$\mathcal{H}$}}
\DeclareMathOperator{\dif}{\textup{d$\lambda$}}
\newcommand\myeqqq{\mathrel{\stackrel{\makebox[0pt]{\mbox{\normalfont\tiny (\ref{dr})}}}{\geq}}}
\newcommand\weq{\mathrel{\stackrel{\makebox[0pt]{\mbox{\normalfont\tiny (\ref{bez})}}}{\leq}}}
\newcommand\myequuh{\mathrel{\stackrel{\makebox[0pt]{\mbox{\normalfont\tiny (\ref{incl})}}}{\leq}}}\newcommand\myequuuh{\mathrel{\stackrel{\makebox[0pt]{\mbox{\normalfont\tiny (\ref{kik})}}}{\leq}}}
\newcommand\myequh{\mathrel{\stackrel{\makebox[0pt]{\mbox{\normalfont\tiny (\ref{haus})}}}{\leq}}}
\begin{document}

\title[Anisotropic Lower-Dimensional Minkowski Content and $\mathcal{S}$-content]{Anisotropic Lower-Dimensional Minkowski Content and $\mathcal{S}$-content}

\author*[]{\fnm{Filip} \sur{Fryš}\,\orcidlink{0009-0000-2134-0109}}\email{filip.frys@matfyz.cuni.cz}

\affil[]{\orgdiv{Mathematical Institute of Charles University}, \orgname{Charles University, Faculty of Mathematics and Physics}, \orgaddress{\street{Sokolovsk\'{a} 83}, \city{Praha 8}, \postcode{186 75}, \state{Czech Republic}}}

%%==================================%%
%% Sample for unstructured abstract %%
%%==================================%%
\abstract{This paper investigates the lower-dimensional anisotropic Minkowski content and 
$\mathcal{S}$-content. We establish that these anisotropic contents exhibit properties analogous to their isotropic counterparts by proving analogous inequalities between the lower-dimensional anisotropic Minkowski content and 
$\mathcal{S}$-content. A key component of our approach is demonstrating that the associated anisotropic volume function is of Kneser type, a result that underpins many of our proofs. In addition, we introduce anisotropic versions of the Minkowski and 
$\mathcal{S}$-dimensions and derive inequalities relating them. As an application, we analyze the existence of the 
$log_2(3)$-dimensional anisotropic Minkowski and 
$\mathcal{S}$-contents of the Sierpinski gasket.}

\keywords{anisotropic lower-dimensional Minkowski content, Kneser function, volume function, convex body}

%%\pacs[JEL Classification]{D8, H51}

%%\pacs[MSC Classification]{35A01, 65L10, 65L12, 65L20, 65L70}

\maketitle\section*{Acknowledgment}{I would like to express my gratitude to my master's thesis supervisor Jan Rataj for his support and valuable advice, thanks to which I was able to complete my thesis, from which most of the results presented in this article are derived. 

The author gratefully acknowledges the financial support from \emph{SVV-2025-260837} and \emph{PRIMUS/24/SCI/009}, provided by Charles University}\section*{Introduction}\label{secIntro}
This paper is dedicated to anisotropic versions of lower-dimensional \textit{Minkowski content} and \textit{$\mathcal{S}$-content}. It provides a generalization of the notions of isotropic lower-dimensional Minkowski content and $\mathcal{S}$-content introduced in \cite{Rataj}. 

The so-called \textit{$C$-anisotropic volume function} is defined at the point $r\geq0$ as the Lebesgue measure of the set $E\oplus rC\coloneq\{e+rc;\;(e, c)\in E\times C\}$, that is $$V_{E, C}(r):=\lambda^n(E\oplus rC)\quad\text{for $r\geq0$},$$ where  $E\subseteq\mathbb{R}^n$ is a fixed compact set and $C\subseteq\mathbb{R}^n$ is a fixed \textit{convex body} the interior of which contains $0$, i.e. a compact convex set containing properly the origin. We can understand the function $V_{E, C}(\cdot)$ as a measurement of the volume of the \textit{$C$-anisotropic tubular neighborhood of $E$}, i.e. the set $E_{r, C}\coloneq E\oplus rC$, and so $$V_{E, C}(r)=\lambda^n(E_{r, C})\quad\text{for $r\geq0$.}$$

This function is studied in depth in \cite{Villa}. The authors characterize the set of points of differentiability of $V_{E, C}(\cdot)$, show that its complement is at most countable, give an explicit formula for its one-sided derivatives, and describe the second derivative of $V_{E, C}(\cdot)$ in the distributional sense. We will use their results to prove that for a given compact $E$, the $C$-anisotropic volume function \textit{of Kneser type of order n}, i.e. \begin{align}\label{knes}
    V_{E, C}(tb)-V_{E, C}(ta)\leq t^n\big(V_{E, C}(b)-V_{E, C}(a)\big)\quad \text{ for any $t\geq 1$ and $b>a>0$}.
\end{align}

Kneser proved in \cite{Kneser} that $V_{E, C}(\cdot)$ satisfies (\ref{knes}) in the isotropic case, i.e. for $C=B(0, 1)$. Later, 
Stachó showed in \cite{Stachó} that $V_{E, C}(\cdot)$ is Kneser for any symmetric $C$ and arbitrary $E$. But the results of \cite{Villa} enable us to provide a proof for a general convex body $C$ containing properly the origin.

\medskip

The \textit{$s$-dimensional (isotropic) Minkowski of content of a compact set $E\subseteq\mathbb{R}^n$} is given by \begin{equation}\label{eq.2}
    \mathcal{M}^s(E)\coloneq\lim_{r\to0_+}\frac{\lambda^n\big(E\oplus B(0, r)\big)}{\omega_{n-s}r^{n-s}}=\lim_{r\to0_+}\frac{V_{E, B(0, 1)}(r)}{\omega_{n-s}r^{n-s}}
\end{equation} if it exists, where $s\in[0, n]$ and $$\omega_{t}\coloneq\frac{\pi^{\frac{t}{2}}}{\Gamma(1+t)}\quad\text{for $t\geq0$.}$$ Here, $\Gamma(\cdot)$ stands for the standard \textit{Gamma function.} If $t\in\mathbb{N}_0$, then $\omega_t$ is nothing but the volume of the $t$-dimensional unit ball.

Similarly, if $s\in[0, n)$, then the \textit{$s$-dimensional (isotropic) $\mathcal{S}$-content of a compact set $E\subseteq\mathbb{R}^n$} is given by\begin{equation}\label{eq.1}
    \mathcal{S}^s(E)\coloneq\lim_{r\to0_+}\frac{V_{E, B(0,1)}^{\prime}(r)}{(n-s)\omega_{n-s}r^{n-s}},
\end{equation} if the limit exists.

The properties and behavior of (\ref{eq.2}) and (\ref{eq.1}) are studied in \cite{Rataj}, where the majority of the proofs rely on the fact that the function $V_{E, B(0, 1)}$ is of Kneser type. Since we prove that $V_{E, C}(\cdot)$ is of Kneser type for a general convex body $C$ of full dimension, we can naturally extend the results of \cite{Rataj}: we will study the convergence and asymptotic behavior of
 \begin{align}\label{kds2}
    \frac{V_{E, C}(r)}{r^{n-s}}&&\text{and}&&\frac{V_{E, C}^{\prime}(r)}{(n-s)r^{n-s-1}}
\end{align} as $r$ tends to zero from the right, where $s\in[0, n)$.

The \textit{$s$-dimensional $C$-anisotropic Minkowski content of $E$} is defined as the limit of the first item of (\ref{kds2}) as $r$ approaches zero from the right, if it exists, i.e. \begin{equation}\label{kds8}
\mathcal{M}^s_C(E)\coloneq\lim_{r\to0_+}\frac{V_{E, C}(r)}{r^{n-s}}.
\end{equation} 

The \textit{$s$-dimensional $C$-anisotropic $\mathcal{S}$-content of $E$} is given by \begin{equation}\label{kds3}
    \mathcal{S}^s_C(E)\coloneq\lim_{r\to0_+}\frac{V_{E, C}^{\prime}(r)}{(n-s)r^{n-s-1}},
\end{equation} if the limit exists, where $s\in[0, n)$.

We prove inequalities between limits superior and inferior as $r$ tends to zero from the right of the fractions in (\ref{kds8}) and compute them explicitly for the \textit{Sierpinki gasket} for arbitrary $C$. We will find that all four values are in this case different.

Later, we introduce the \textit{$C$-anisotropic Minkowski dimension and $\mathcal{S}$-dimension} and prove inequalities between them thereafter. The isotropic case (i.e. for $C=B(0, 1)$) is treated in the article \cite{Rataj}, which we draw inspiration from. We will also discuss the role of $C$ in the definitions of the dimensions mentioned above.

\medskip

At the end of this paper, we provide a proof of a theorem that generalizes Theorem 3.4 from \cite{Villa2}. It is a standard-type of an existence-theorem in this branch of mathematics---it basically says that the existence of a countable system of a given compact $E$ formed by pairwise disjoint compact sets that covers the set $E$ up to an $\mathcal{H}^s$-negligible set and each element of which admits the $s$-dimensional $C$-anisotropic Minkowski content ensures the existence of the $C$-anisotropic Minkowski content of the whole set $E$ provided that there is a probability measure absolutely continuous with respect to $\mathcal{H}^{s}$ satisfying certain density condition.

\medskip

The rest of this paper is organized as follows.

In the first section---\nameref{secPrelim}---we present the notation that will be used, collect the preliminaries on geometric measure theory and anisotropic (outer) Minkowski content, and lay out all the tools we need in the following sections, particularly \emph{sets of finite perimeter}, \emph{the Besicovitch covering theorem}, \emph{anisotropic perimeter}, \emph{anisotropic (outer) Minkowski content} and their properties.

In the second section---\nameref{secDef}---we introduce the notions of $s$-dimensional Minkowski content and $\mathcal{S}$-content. 

In the third section---\nameref{secDim}---we define the so-called Minkowski dimension and $\mathcal{S}$-dimension of a given compact set in $\mathbb{R}^n$. We also discuss the role of $C$ in these definitions. We show that the $C$-anisotropic volume function is Kneser and use this property to develop certain inequalities.

In the last section---\nameref{secFurther}---we give a proof of an existence-type theorem for the $s$-dimensional $C$-anisotropic Minkowski content and we take a closer look at the limit at zero from the right of the function $$r\mapsto\frac{V_{E, C}^{\prime}(r)}{r^{n-1}}.$$

\section{Preliminaries}
\label{secPrelim}
\subsection{Notation}
 Let $n\in\mathbb{N}$ (and so $n\geq1$). For $x, y\in\mathbb{R}^n$, we denote by $x\cdot y=\sum_{i=1}^{n}x_i y_i$ the \textit{Euclidean scalar product of $x$ and $y$}, and the symbol $|\cdot|$ stands for the norm induced by this scalar product.

If $A, B\subseteq\mathbb{R}^n$ are general and $E, F\subseteq\mathbb{R}^n$ are measurable, then: \begin{itemize}
    \item The symbol $A\oplus B\coloneq \{a+b;\;a\in A,\;b\in B\}$ stands for the \textit{Minkowski sum of A and B}. 
    \item The \textit{$r$-multiple of $A$} is denoted by $rA\coloneq\{ra;\;a\in A\}$, where $r\in\mathbb{R}$.
    \item The \textit{symmetric difference of $A$ and $B$} is denoted by $A\Delta B\coloneq (A\setminus B)\cup(B\setminus A)$. 
    \item We denote the \textit{interior of $A$} by $\inter (A),$ the \textit{exterior of $A$} by $\ext (A)$ and the \textit{boundary of $A$} 
    by $\partial A.$ 
    \item If $A\neq\emptyset$, then $\dist(\cdot, A)$ denotes the classical \textit{Euclidean distance from $A$} and $\diam(A)$ denotes the \textit{diameter of $A$.}
    \item The the symbol $\lambda^n(E)$ stands for the \textit{Lebesgue measure of $E$} and $\mathcal{H}^s(A)$ stands for \textit{$s$-dimensional Hausdorff measure of $A$}, where $s\geq0$, (note that for $s=0$ it is the \textit{counting measure}).
    \item The \textit{characteristic function of $A$} will be denoted by  $\chi_A$.
\end{itemize}

Further, let $B(x, r)$ denote the closed ball centered at $x$ with radius $r>0$. We set $\mathbb{S}^{n-1}\coloneq\partial B(0, 1).$

 Let $\mu$ be a nonnegative measure and let $B$ be a $\mu$-measurable set. The symbol $\mu\big|_{B}$ stands for the \textit{restriction of $\mu$ to $B$.} We abbreviate \textit{almost everywhere} by a.e. and \textit{almost all} by a.a. 

 The space $\mathcal{C}^{\infty}_c(\Omega)$ consists of all infinitely differentiable compactly supported functions on $\Omega$. 

 Let $\{E_{\varepsilon}\}_{\varepsilon>0}$ be a family of $\lambda^n$-measurable subsets of an open set $\Omega\subseteq\mathbb{R}^n.$ We say that $\{E_{\varepsilon}\}_{\varepsilon>0}$ \textit{converges towards a set $E$ in $\mathcal{L}^1_{\mathrm{loc}}(\Omega)$} if $\chi_{E_{\varepsilon}}\xrightarrow{\varepsilon\to0_{+}}\chi_{E}$ in $\mathcal{L}^1_{\mathrm{loc}}(\Omega)$, i.e. in the space of all locally integrable functions on $\Omega$.
     
    The symbol $\nabla$ will be used for the \textit{total derivative} (and we will identify the derivative and the corresponding matrix that represents the derivative).

 \subsection{Geometric Measure Theory}
In this subsection, we formulate some useful results from geometric measure theory. 
\begin{definition}
        Let $k\in\mathbb{N}_0$ with $k\leq n$ and $S\subseteq\mathbb{R}^n$ be $\mathcal{H}^k$-measurable. 
        
        The set $S$ is said to \textit{$k$-rectifiable} provided that there exists a bounded set $F\subseteq\mathbb{R}^k$ and a Lipschitz map $f\colon F\to\mathbb{R}^n$ such that $f(F)=S.$
        
        The set $S$ is said to be \textit{countably $\mathcal{H}^{k}$-rectifiable} if there exists a sequence $\{f_i\}_{i=0}^{\infty}$ of Lipschitz functions from $\mathbb{R}^k$ to $\mathbb{R}^n$ such that $$\mathcal{H}^k\left(E\setminus \bigcup_{i\in\mathbb{N}_0}f_i\left(\mathbb{R}^k\right)\right)=0.$$
 \end{definition}

 In fact, the definition of $k$-rectifiability of the set $S$ is equivalent to the fact that $S$ is contained in a finite union of Lipschitz surfaces.
\begin{rem}\label{remark}
    Given an $\mathcal{H}^{n-1}$-rectifiable set $S\subseteq\mathbb{R}^n$ with $\mathcal{H}^{n-1}(S)<\infty$, there exists a countable family of pairwise disjoint compact subsets of $S$ that covers $S$ up to an $\mathcal{H}^{n-1}$-negligible set and each element of which is a subset of a Lipschitz $(n-1)$-graph with finite $\mathcal{H}^{n-1}$ measure (see \cite[Lemma 11.1 and Remark 11.7]{simon}). 

\end{rem}

\begin{definition}
    Let $\Omega\subseteq\mathbb{R}^n$ be open and $E\subseteq\Omega$ be measurable. The set $E$ is said to be a \textit{set of finite perimeter in $\Omega$} if the derivative of $\chi_E$ in the sense of distributions, denoted by $D\chi_E,$ is an $\mathbb{R}^n$-valued Radon measure with finite total variation in $\Omega$, i.e. it is an $\mathbb{R}^n$-valued Radon measure with finite total variation in $\Omega$ satisfying $$\int\limits_{E}\nabla f\dif^{n}=-\int\limits_{\Omega}f\textup{ d}D\chi_E\quad\text{for all $f\in\mathcal{C}_c^{\infty}(\Omega)$}.$$ The \textit{perimeter of E in $\Omega$} is given by $$\Per(E; \Omega)\coloneq \Vert D\chi_E\Vert (\Omega),$$ i.e. the \textit{total variation of $D\chi_E$ in $\Omega$.}
\end{definition}

\begin{definition}
    Let $E\subseteq\mathbb{R}^n$ be measurable and $x\in\mathbb{R}^n.$ We define the \textit{upper} and \textit{lower Lebesgue density of E at x} as \begin{align*}
        \Theta_n(E, x)_*\coloneq \liminf_{r\to0_+}\frac{\lambda^n\Big|_{E}\big(B(x, r)\big)}{\lambda^n\big(B(x, r)\big)}&&\text{and}&&\Theta_n(E, x)^*\coloneq \limsup_{r\to0_+}\frac{\lambda^n\Big|_{E}\big(B(x, r)\big)}{\lambda^n\big(B(x, r)\big)}.
    \end{align*} Further, let $\Theta_n(E, x)$ be their common value (i.e. $\Theta_n(E, x)\coloneq \Theta_n(E, x)_*$ provided that $\Theta_n(E, x)_*=\Theta_n(E, x)^*$) which is referred to as the \textit{Lebesgue density of E at x}.

    Denote $$E^t\coloneq \{x\in\mathbb{R}^n;\; \Theta_n(E, x)=t\}\quad\text{for $t\in [0, 1]$}.$$ The \textit{essential boundary of E} is given by $$\partial_*E\coloneq \mathbb{R}^n\setminus\left(E^{0}\cup E^1\right).$$ Let us also introduce the \textit{reduced boundary of E} defined by $$\partial^*E\coloneq \left\{x\in E^{1/2};\; \exists \nu_E(x)\in\mathbb{S}^{n-1}:\; \frac{E-x}{\varepsilon}\xrightarrow{\varepsilon\to0_{+}}\{y\in\mathbb{R}^n;\;y\cdot\nu_E(x)\leq0\}\right\},$$ where we consider the $\mathcal{L}^1_{\mathrm{loc}}(\mathbb{R}^n)$-convergence of sets. Moreover, if $x\in\partial^* E,$ we call $\nu_E(x)$ the \textit{outer unit normal to E at x.}  
\end{definition}
\begin{rem}\label{remarko}
    Let us state the following facts (see \cite{ambrosio}, section 3.3).\begin{itemize}
 \item For a set $E$ of finite perimeter in $\Omega$, we always have $$\Per(E; \Omega)=\mathcal{H}^{n-1}\left(\partial_* E\cap \Omega\right)=\mathcal{H}^{n-1}\left(\partial^* E\cap \Omega\right).$$
 \item We have $\Per(E; \Omega)=\Per(F; \Omega)$ whenever $\lambda^n\big((E\Delta F)\cap\Omega\big)=0.$
\item If $E$ is a set of finite perimeter in $\Omega,$ then $\Omega\setminus E$ is also a set of finite perimeter in $\Omega$ with $\nu_E=-\nu_{\Omega\setminus E}$ $\left(\mathcal{H}^{n-1}\Big|_{\partial^* E\cap \Omega}\right)$-a.e., $\mathcal{H}^{n-1}(\partial^* E\cap\Omega) = \mathcal{H}^{n-1}\left(\partial^* \left(\Omega\setminus E\right)\cap\Omega\right)$ and $\textup{Per}(E; \Omega)=\textup{Per}\big(\Omega\setminus E; \Omega\big).$

    \end{itemize}
\end{rem}

Let us now recall the Besicovitch covering theorem (see \cite[subsection 1.5.2, Theorem 2]{evans}):
\begin{theorem}\label{bes}
       Let $A\subseteq\mathbb{R}^n$ and $\rho\colon A\to (0, \infty)$ be a bounded function. Then there exist $N\in\mathbb{N}$ depending only on $n$ and $S\subseteq A$ at most countable such that \begin{align*}
          A\subseteq\bigcup_{x\in S}B\big(x, \rho(x)\big) &&\text{and}&& \sum_{x\in S}\chi_{B(x, \rho(x))}\leq N.
       \end{align*}
\end{theorem}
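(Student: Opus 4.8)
The plan is to prove the theorem in two stages: a routine reduction to the case where $A$ is bounded, and then a greedy construction of balls together with a dimensional packing estimate. Throughout I will use the notation $B_x:=B(x,\rho(x))$.

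\emph{Reduction to bounded $A$.} Put $R:=\sup_{a\in A}\rho(a)<\infty$ and fix the grid of half-open cubes of side $2R$ tiling $\mathbb{R}^n$. For each grid cube $Q$ with $A\cap Q\neq\emptyset$ I would apply the bounded case below to $A\cap Q$ with $\rho$ restricted to $A\cap Q$, obtaining an at most countable $S_Q\subseteq A\cap Q$ with $A\cap Q\subseteq\bigcup_{x\in S_Q}B_x$ and $\sum_{x\in S_Q}\chi_{B_x}\le N_n$ for a constant $N_n$ depending only on $n$. Then $S:=\bigcup_Q S_Q$ is at most countable, $A\subseteq\bigcup_{x\in S}B_x$, and for a fixed $z$ only balls centred in cubes meeting $B(z,R)$ contribute to $\sum_{x\in S}\chi_{B_x}(z)$; since at most $3^n$ grid cubes meet $B(z,R)$, the overlap is at most $3^nN_n=:N$. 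So the whole problem reduces to the bounded case.

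\emph{Greedy selection for bounded $A$.} Assume now $A$ bounded. Choose $a_m$ recursively: given $a_1,\dots,a_{m-1}$, set $A_m:=A\setminus\bigcup_{i<m}B_{a_i}$; if $A_m=\emptyset$ stop, otherwise pick $a_m\in A_m$ with $\rho(a_m)\ge\tfrac12\sup_{a\in A_m}\rho(a)$. For $i<m$ this records (P1) $|a_m-a_i|>\rho(a_i)$, since $a_m\notin B_{a_i}$; and (P2) $\rho(a_m)\le2\rho(a_i)$, since $a_m\in A_m\subseteq A_i$. By (P1), (P2) and the triangle inequality the balls $B(a_m,\tfrac13\rho(a_m))$ are pairwise disjoint, and they all lie in one fixed bounded ball; hence $\sum_m\rho(a_m)^n<\infty$, so $\rho(a_m)\to0$. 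If some $p\in A$ avoided every $B_{a_m}$, then $p\in A_m$ for all $m$, forcing $\rho(p)\le2\rho(a_m)\to0$, which is impossible; so the at most countable family $\{B_{a_m}\}_m$ covers $A$, and $S:=\{a_m\}_m$ takes care of the covering assertion.

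\emph{The overlap bound.} It remains to bound, for each $z\in\mathbb{R}^n$, the number of $m$ with $z\in B_{a_m}$ by a constant $N_n$ depending only on $n$. Split these indices according to whether $z$ lies deep in the ball, $|a_m-z|\le\tfrac13\rho(a_m)$, or not. If $i<j$ are both ``deep'', then by (P2) and the triangle inequality $|a_i-a_j|\le\tfrac13\rho(a_i)+\tfrac13\rho(a_j)\le\rho(a_i)$, contradicting (P1); so there is at most one deep index. For the remaining indices $\rho(a_m)$ is comparable to $|a_m-z|$ (indeed $|a_m-z|<\rho(a_m)\le3|a_m-z|$), and I would bound their number by a dimensional packing/separation argument exploiting (P1) (the centres are mutually well separated relative to their radii, so within a fixed scale only boundedly many fit) together with (P2) (the radii at most double along the selection order). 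Summing gives $N_n$, and with the reduction above the theorem follows with $N=3^nN_n$.

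\emph{Main obstacle.} The genuine difficulty is precisely this last estimate --- the combinatorial-geometric lemma that a point of $\mathbb{R}^n$ can belong to at most $N_n$ of the greedily chosen balls. Its delicate regime is that of widely differing radii, a small late ball sitting near $z$ beside a much larger early one: here neither a crude volume count nor a crude angular bound handles the situation on its own, and (P1) and (P2) must be combined carefully. This is the step where the dimension $n$ and the constant $N$ actually enter; everything else in the argument is the triangle inequality and a volume estimate.
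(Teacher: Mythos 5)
The paper does not prove this statement at all --- it is the Besicovitch covering theorem, quoted verbatim from Evans--Gariepy \cite[subsection 1.5.2, Theorem 2]{evans} --- so there is no in-paper argument to compare yours with; your proposal has to stand on its own, and as written it does not. The reduction to bounded $A$, the greedy selection with properties (P1) and (P2), the disjointness of the shrunken balls $B\bigl(a_m,\tfrac13\rho(a_m)\bigr)$, the covering argument via $\rho(a_m)\to0$, and the observation that at most one index can be ``deep'' are all correct and are exactly the standard opening moves. But the entire content of the theorem is the existence of the dimensional constant $N$, and at the one place where that constant is actually produced --- bounding the number of non-deep indices $m$ with $z\in B_{a_m}$ --- you write only that you ``would bound their number by a dimensional packing/separation argument.'' That is not a proof step; it is a restatement of the theorem for the remaining indices.

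Concretely, here is why the gap is genuine and not routine. For the non-deep indices you know $\tfrac13\rho(a_m)<|a_m-z|\le\rho(a_m)$, but these distances (equivalently the radii) can spread over unboundedly many dyadic scales: (P2) only prevents a \emph{later} radius from exceeding twice an \emph{earlier} one, so a huge early ball and a tiny late ball can both contain $z$. A single-scale volume-packing count therefore fails, and the argument that actually works is an angular one: one must show that if $i<j$, both balls contain $z$, and the angle at $z$ between $a_i-z$ and $a_j-z$ is below an explicit dimensional threshold, then (P1) and (P2) force $a_j\in B\bigl(a_i,\rho(a_i)\bigr)$, a contradiction; covering $\mathbb{S}^{n-1}$ by boundedly many caps of that aperture then yields $N_n$. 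That trigonometric lemma (it is where the ugly explicit constants such as $16^n$ in Evans--Gariepy come from) is the heart of Besicovitch's theorem, it is delicate precisely in the mixed-scale regime you identify as the ``main obstacle,'' and it is missing from your write-up. Until it is supplied, the constant $N$ has not been obtained and the theorem is not proved.
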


\subsection{Anisotropic (Outer) Minkowski Content}
In this subsection, we will summarize the basic facts about anisotropic (outer) Minkowski content from \cite{chambolle} and \cite{Villa2}.
\begin{definition}\label{convexity}
    A \textit{convex body} is a non-empty, compact and convex subset of $\mathbb{R}^n$. The system of all convex bodies the interior of which contains the origin will be denoted by the symbol $\mathcal{C}^n_0$ and the system of all compact subsets of $\mathbb{R}^n$ will be denoted by $\mathcal{K}^n.$
    
    Let $C\in\mathcal{C}^n_0$. The \textit{support function of $C$} is given by the formula $$h_C(y)\coloneq \sup_{x\in C}x\cdot y\quad \text{for $y\in\mathbb{R}^n.$}$$ 
     We also introduce the \textit{polar function of C} given by $$h^{\circ}_C(x)\coloneq \sup_{\substack{y\in\mathbb{R}^n\\ h_C(y)\leq1}} x\cdot y\quad \text{for $x\in\mathbb{R}^n.$}$$ For $E\subseteq\mathbb{R}^n$ with $E\neq\emptyset$, we define the \textit{$C$-anisotropic distance function to the set $E$} by $$\textup{dist}_{C}(x, E)\coloneq \underset{y\in E}{\textup{inf}} \textit{ h}^{\circ}_C(x-y)\quad\text{for $x\in\mathbb{R}^n.$}$$ 
\end{definition}

\begin{rem} Let $C\in\mathcal{C}^n_0$.
    \begin{itemize}
    \item We define the \textit{polar of $C$} by $C^{\circ}\coloneq\{x\in\mathbb{R}^n;\;h_C(x)\leq 1\}$. We see that $h^{\circ}_C=h_{C^{\circ}}$ and $(C^{\circ})^{\circ}=C$.
        \item The function $x\mapsto h_C(x)$ is sublinear, hence convex, and Lipschitz with the property $h_{-C}(x)=h_{C}(-x)$ for all $x\in\mathbb{R}^n$. It is also true that $h_{aC}=ah_{C}$ for any $a>0$.
 \end{itemize}\end{rem}

\begin{definition}\label{outerM}
    Let $Q\subseteq\mathbb{R}^n$ be with $0\in Q$, $\Omega\subseteq\mathbb{R}^n$ be open and $\varepsilon>0$. We define the $(\varepsilon, Q)$-\textit{anisotropic outer Minkowski content (in $\Omega$)} as a functional defined on $\lambda^n$-measurable subsets $E\subseteq\mathbb{R}^n$ by the formula $$\mathcal{SM}_{\varepsilon, Q}(E; \Omega)\coloneq \frac{1}{\varepsilon}\lambda^n\Big(\big((E\cap\Omega)\oplus\varepsilon Q\big)\cap(\Omega\setminus E)\Big).$$ Further, let \begin{align*}
        \mathcal{SM}_{Q}(E; \Omega)_{*}\coloneq \liminf_{\varepsilon\to0_{+}}\mathcal{SM}_{\varepsilon, Q}(E; \Omega)&&\text{and}&&\mathcal{SM}_{Q}(E; \Omega)^{*}\coloneq \limsup_{\varepsilon\to0_{+}}\mathcal{SM}_{\varepsilon, Q}(E; \Omega).
    \end{align*}  The former formula defines the \textit{lower $Q$-anisotropic outer Minkowski content of $E$ (in $\Omega$)}, the latter the \textit{upper $Q$-anisotropic outer Minkowski content of $E$ (in $\Omega$)}. If $\mathcal{SM}_{Q}(E; \Omega)_*=\mathcal{SM}_{Q}(E; \Omega)^*$, we define the \textit{$Q$-anisotropic outer Minkowski content (in $\Omega$)} by the formula $\mathcal{SM}_{Q}(E; \Omega)\coloneq\mathcal{SM}_{Q}(E; \Omega)_*$. If we write $\mathcal{SM}_{Q}(E; \Omega)$, we implicitly mean that the limit exists.
\end{definition}
    
    Let us define the so-called $C$-anisotropic Minkowski content, which is useful for $\lambda^n$-negligible sets (unlike $\mathcal{SM}_C$):
    \begin{definition}\label{Minkow}
        Let $S\subseteq\mathbb{R}^n$ be measurable and $Q\subseteq\mathbb{R}^n$ with $0\in Q$. Putting $$\mathcal{M}_{\varepsilon, Q}(S; \Omega)\coloneq\frac{1}{2{\varepsilon}}\lambda^n\Big(\big((S\cap\Omega)\oplus\varepsilon Q\big)\cap\Omega\Big),\quad\text{$\varepsilon>0$},$$ we define the \textit{$(\varepsilon, Q)$-anisotropic Minkowski content of $S$ in $\Omega$.} We define the \textit{lower} and \textit{upper $Q$-anisotropic Minkowski content of $S$ in $\Omega$} by \begin{align*}
            \mathcal{M}_{Q}(S; \Omega)_*\coloneq\liminf_{\varepsilon\to0_+}\mathcal{M}_{\varepsilon, Q}(S; \Omega)&&\text{and}&&\mathcal{M}_{Q}(S; \Omega)^*\coloneq\limsup_{\varepsilon\to0_+}\mathcal{M}_{\varepsilon, Q}(S; \Omega). 
        \end{align*} If $\mathcal{M}_{Q}(S; \Omega)_*=\mathcal{M}_{Q}(S; \Omega)^*$, we define the \textit{$Q$-anisotropic Minkowski content of $S$ in $\Omega$} by $\mathcal{M}_{Q}(S; \Omega)\coloneq\mathcal{M}_{Q}(S; \Omega)_*$. If we write $\mathcal{M}_{Q}(S; \Omega)$, we implicitly mean that the limit exists.
    \end{definition}

    In this work, we provide a generalization of the notions introduced in Definitions \ref{outerM} and \ref{Minkow} for $\Omega=\mathbb{R}^n$. Namely, we replace the $\varepsilon$ in the denominators by $\varepsilon^{n-s}$, where $s\in[0, n].$ Roughly speaking, the number $s$ reflects in some sense the ``dimension'' of the set studied if the limit is neither zero nor infinity.

The meaning of the word \textit{isotropic} is ``having the same properties in all directions'', while the word \textit{anisotropic} is its opposite, i.e. ``not having the same properties in all directions'' (see \href{https://www.etymonline.com/word/isotropic}{the etymology of (an)isotropic}).    

\begin{definition}
    Let $C\in\mathcal{C}^n_0$ and $\Omega\subseteq\mathbb{R}^n$ be open. For $E\subseteq\mathbb{R}^n$ measurable we define the \textit{$C$-anisotropic perimeter of $E$ in $\Omega$} by \[
     \textup{Per}_{h_{C}}(E; \Omega)\coloneq \begin{cases}
         {\displaystyle \int\limits_{\Omega\cap\partial^* E}}h_C\left(\nu_E\right)\difm^{n-1
    },& \text{  if $\Per(E; \Omega)<\infty$},\\+\infty, & \text{  otherwise}.
     \end{cases}  
\]
\end{definition}

We have a sufficient condition for the existence of the limit $\mathcal{SM}_{C}(E; \mathbb{R}^n)$:
\begin{theorem}[\protect{\cite[Theorem 4.3]{Villa2}}]\label{closed}
    Let $C\in\mathcal{C}^n_0$ and $E$ be a closed subset of $\mathbb{R}^n$ such that $\partial E$ is a countably $\mathcal{H}^{n-1}$-rectifiable and bounded set and such that there exists $\gamma>0$ and a probability measure $\eta$ absolutely continuous with respect to $\mathcal{H}^{n-1}$ satisfying $$\forall r>0\; \forall x\in\partial E: \eta\big(B(x, r)\big)\geq \gamma r^{n-1}.$$ Then we have $$\mathcal{SM}_{C}(E; \mathbb{R}^n)=\textup{Per}_{h_C}(E; \mathbb{R}^n)+\int\limits_{E^{0}\cap\partial E}\big(h_C(\nu_E)+h_C(-\nu_E)\big)\difm^{n-1}.$$
\end{theorem}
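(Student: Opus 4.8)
The plan is to establish separately the two bounds $\mathcal{SM}_C(E;\mathbb{R}^n)^*\le R$ and $\mathcal{SM}_C(E;\mathbb{R}^n)_*\ge R$, where $R$ denotes the right-hand side of the claimed identity, after recording the fine structure of $\partial E$ that the hypotheses provide. First, applying Theorem~\ref{bes} to cover $\partial E$ by balls of radius $r$ and comparing $\gamma r^{n-1}$ times the number of centres with the total $\eta$-mass $1$ (the multiplicity being at most $N$) bounds that number by $N/(\gamma r^{n-1})$, hence $\mathcal{H}^{n-1}(\partial E)<\infty$; together with the countable $\mathcal{H}^{n-1}$-rectifiability of $\partial E$ and Federer's criterion, $E$ is a set of finite perimeter, so $\textup{Per}_{h_C}(E;\mathbb{R}^n)=\int_{\partial^*E}h_C(\nu_E)\,\difm^{n-1}<\infty$ and, by Federer's theorem (Remark~\ref{remarko}), $\mathcal{H}^{n-1}(\partial_*E\setminus\partial^*E)=0$; since $\partial^*E\subseteq\partial_*E\subseteq\partial E$, up to an $\mathcal{H}^{n-1}$-null set $\partial E=\partial^*E\sqcup(\partial E\cap E^0)\sqcup(\partial E\cap E^1)$. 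Next, $h^\circ_C$ is the Minkowski gauge of $C$ (so $z\in\varepsilon C\iff h^\circ_C(z)\le\varepsilon$), whence $\mathcal{SM}_{\varepsilon,C}(E;\mathbb{R}^n)=\tfrac1\varepsilon\lambda^n\big((E\oplus\varepsilon C)\setminus E\big)$ is (up to the factor $\tfrac1\varepsilon$) the volume of the one-sided anisotropic tube $\{y\notin E:\textup{dist}_C(y,E)\le\varepsilon\}$, and since $E$ is closed the $h^\circ_C$-nearest point of any such $y$ lies on $\partial E$, so this tube is contained in $\partial E\oplus\varepsilon C$. Near a density-$1$ point of $E$ it is moreover squeezed into $B(x,O(\varepsilon))\setminus E$, of measure $o(\varepsilon^n)$, and a routine covering-plus-dominated-convergence argument (using $\mathcal{H}^{n-1}(\partial E\cap E^1)<\infty$) shows the $E^1$-part of $\partial E$ contributes $o(\varepsilon)$ to $\mathcal{SM}_{\varepsilon,C}$; it therefore remains to account for $\partial^*E$ and $\partial E\cap E^0$, which produce exactly the two integrals of $R$.

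The geometric core is the half-space computation. For the half-space $\Pi^-=\{y:y\cdot\nu\le0\}$ with unit normal $\nu$ one has $\Pi^-\oplus\varepsilon C=\{y:y\cdot\nu\le\varepsilon h_C(\nu)\}$, so $(\Pi^-\oplus\varepsilon C)\setminus\Pi^-=\{0<y\cdot\nu\le\varepsilon h_C(\nu)\}$ is a slab of thickness $\varepsilon h_C(\nu)$; for the hyperplane $\Pi=\partial\Pi^-$ instead, $\Pi\oplus\varepsilon C=\{-\varepsilon h_C(-\nu)\le y\cdot\nu\le\varepsilon h_C(\nu)\}$ is a slab of thickness $\varepsilon\big(h_C(\nu)+h_C(-\nu)\big)$. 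This pins down the expected local density of the tube, per unit $\mathcal{H}^{n-1}$-area on $\partial E$, as $h_C(\nu_E)$ at a point of $\partial^*E$ (there $E$ blows up to $\{y\cdot\nu_E\le0\}$ and only the outer half of $\varepsilon C$ survives deletion of $E$) and as $h_C(\nu_E)+h_C(-\nu_E)$ at a point of $\partial E\cap E^0$ (there $E$ is asymptotically negligible while $\partial E$, being rectifiable, has an approximate tangent hyperplane, so the full two-sided slab survives).

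For the upper bound I would invoke Remark~\ref{remark} to decompose $\partial E$, up to an $\mathcal{H}^{n-1}$-null set, into disjoint compact pieces $\Sigma_i$, each inside a Lipschitz $(n-1)$-graph of finite $\mathcal{H}^{n-1}$-measure, further split into their parts in $\partial^*E$ and in $E^0$. On a Lipschitz graph piece the area formula along the normal direction gives $\lambda^n(\Sigma_i\oplus\varepsilon C)\le\varepsilon\int_{\Sigma_i}\big(h_C(\nu)+h_C(-\nu)\big)\,\difm^{n-1}+o(\varepsilon)$, and on the $\partial^*E$-part the density-$\tfrac12$ blow-up replaces $h_C(\nu_E)+h_C(-\nu_E)$ by $h_C(\nu_E)$ after intersecting with $\mathbb{R}^n\setminus E$. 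Summing over $i$, using $(E\oplus\varepsilon C)\setminus E\subseteq\bigcup_i(\Sigma_i\oplus\varepsilon C)\cup(N\oplus\varepsilon C)$ with $N$ the leftover null set (which is rectifiable, so $\lambda^n(N\oplus\varepsilon C)=o(\varepsilon)$), and bounding the multiplicity of this cover and the overlaps between pieces by Theorem~\ref{bes} together with the Ahlfors lower bound $\eta(B(x,r))\ge\gamma r^{n-1}$ (which prevents the pieces from accumulating too densely), yields $\limsup_{\varepsilon\to0_+}\mathcal{SM}_{\varepsilon,C}(E;\mathbb{R}^n)\le R$.

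The lower bound, which I expect to be the main obstacle, I would obtain by a blow-up argument together with Fatou's lemma. At $\mathcal{H}^{n-1}$-a.e.\ $x\in\partial E$ the rectifiable set $\partial E$ has an approximate tangent hyperplane $T_x$ with unit normal $\nu(x)$ (equal to $\nu_E(x)$ on $\partial^*E$); rescaling by $\tfrac1\varepsilon(\cdot-x)$, one shows that $\tfrac1\varepsilon\big(((E\oplus\varepsilon C)\setminus E)-x\big)$ converges in $\mathcal{L}^1_{\mathrm{loc}}$ to $(\{y\cdot\nu(x)\le0\}\oplus C)\setminus\{y\cdot\nu(x)\le0\}$ for a.e.\ $x\in\partial^*E$ and to $T_x\oplus C$ for a.e.\ $x\in\partial E\cap E^0$ — in the latter case one argues directly: density $0$ makes the rescalings of $E$ vanish in $\mathcal{L}^1_{\mathrm{loc}}$, while the constraint $\textup{dist}_C(\cdot,E)\le\varepsilon$ persists because $x$ is a limit of points of $\partial E$, so the rescaled tube fills the whole two-sided slab around $T_x$. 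Evaluating these limiting slab volumes by the half-space computation shows that the lower density of $A\mapsto\tfrac1\varepsilon\lambda^n\big(((E\oplus\varepsilon C)\setminus E)\cap A\big)$ with respect to $\mathcal{H}^{n-1}\big|_{\partial E}$ is at least $h_C(\nu_E(x))$ on $\partial^*E$ and at least $h_C(\nu_E(x))+h_C(-\nu_E(x))$ on $\partial E\cap E^0$; integrating and interchanging $\liminf$ with the integral by Fatou — the uniform lower regularity of $\eta\ll\mathcal{H}^{n-1}$ supplying the equi-lower-bound that legitimises this passage — gives $\liminf_{\varepsilon\to0_+}\mathcal{SM}_{\varepsilon,C}(E;\mathbb{R}^n)\ge R$. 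The genuinely delicate steps are the blow-up at points of $\partial E\cap E^0$, where $E$ itself disappears under rescaling so no limiting set can simply be read off and one must show the tube really occupies the full slab, together with the quantitative bookkeeping — via Theorem~\ref{bes} and the hypothesis on $\eta$ — needed to control the covering overlaps in the upper bound and the Fatou passage in the lower bound.
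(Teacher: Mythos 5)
A preliminary remark: the paper does not prove this statement — it is imported verbatim as \cite[Theorem 4.3]{Villa2} and used as a black box (e.g.\ in Lemma~\ref{L3}), so there is no in-paper proof to compare yours against. Measured instead against the argument of Lussardi--Villa (building on Ambrosio--Colesanti--Villa), your architecture is the standard and correct one: Besicovitch plus the $\eta$-lower bound to get $\mathcal{H}^{n-1}(\partial E)<\infty$, Federer's criterion for finite perimeter, the $\mathcal{H}^{n-1}$-a.e.\ splitting of $\partial E$ into $\partial^*E$, $\partial E\cap E^0$ and $\partial E\cap E^1$, the half-space/slab computation identifying the local densities $h_C(\nu_E)$ and $h_C(\nu)+h_C(-\nu)$, an upper bound via decomposition into compact pieces of Lipschitz graphs, and a lower bound via blow-up.

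Two of your justifications, however, are wrong as stated. First, an $\mathcal{H}^{n-1}$-null rectifiable set $N$ need \emph{not} satisfy $\lambda^n(N\oplus\varepsilon C)=o(\varepsilon)$: a countable dense subset of a ball is countably rectifiable and $\mathcal{H}^{n-1}$-null, yet its $\varepsilon$-tube has volume bounded below. The only thing controlling the leftover set — and, equally importantly, the tail $\bigcup_{i>K}\Sigma_i$, whose per-piece $o(\varepsilon)$ errors you cannot simply sum over infinitely many $i$ — is the hypothesis on $\eta$: absolute continuity gives $\eta(N)=0$, Theorem~\ref{bes} converts $\eta\big(B(x,r)\big)\geq\gamma r^{n-1}$ into $\lambda^n(A\oplus\varepsilon C)\leq c\,\varepsilon\,\eta\big(A\oplus B(0,c'\varepsilon)\big)$ for $A\subseteq\partial E$, and one must then let $K\to\infty$ and shrink the neighbourhood carefully (this is precisely the mechanism of the paper's own existence theorem in Section~\ref{secFurther}, which you could mimic). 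Second, Fatou does not apply directly in the lower bound, since $\mathcal{SM}_{\varepsilon,C}(E;\mathbb{R}^n)$ is a single number, not an integral over $\partial E$ of an $\varepsilon$-indexed family; the standard substitute is to extract a weak-$*$ limit $\mu$ of the measures $\frac1\varepsilon\lambda^n\big|_{(E\oplus\varepsilon C)\setminus E}$ along a minimizing sequence, prove the density lower bound $\liminf_{r\to0_+}\mu\big(B(x,r)\big)/r^{n-1}\geq c\,\theta(x)$ at $\mathcal{H}^{n-1}$-a.e.\ $x\in\partial E$, and invoke the comparison theorem for densities of Radon measures. Smaller gaps of the same flavour: the $o(\varepsilon^n)$ near density-one points is not uniform in $x$ (Egorov, not dominated convergence, is the tool), and the per-piece estimate for a compact subset of a Lipschitz graph is itself a nontrivial lemma (Theorem 3.4 of \cite{Villa2}) rather than a one-line consequence of the area formula. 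None of this invalidates the strategy, but as written the sketch is not yet a proof.
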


Let us introduce the \textit{anisotropic isoperimetric inequality} (see \cite{Fonseca}, section 4). 
\begin{theorem}\label{asas}
    Let $E\subseteq\mathbb{R}^n$ be a set of finite perimeter in $\mathbb{R}^n$ and of finite Lebesgue measure, and $C\in\mathcal{C}^n_0$. Then $$\textup{Per}_{h_{C}}(E; \mathbb{R}^n)\geq n\lambda^n(C)^{\frac{1}{n}}\lambda^n(E)^{\frac{n-1}{n}}.$$
\end{theorem}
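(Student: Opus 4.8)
The plan is to prove this Wulff-type inequality by Gromov's mass-transportation argument, transporting $E$ onto $C$ (which plays the role of the Wulff shape, $h_C$ being precisely its support function). First I would dispose of the degenerate cases: if $\lambda^n(E)=0$ there is nothing to prove, since $h_C\geq0$ --- indeed $h_C>0$ on $\mathbb{S}^{n-1}$ as $0\in\inter C$ --- forces $\Per_{h_C}(E;\mathbb{R}^n)\geq0$; and if $\Per(E;\mathbb{R}^n)=\infty$ then $\Per_{h_C}(E;\mathbb{R}^n)=+\infty$ by definition. So I may assume $0<\lambda^n(E)<\infty$ and $\Per(E;\mathbb{R}^n)<\infty$. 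Then I would reduce to $E$ bounded: by the coarea formula $\int_0^{\infty}\mathcal{H}^{n-1}\big(E^{1}\cap\partial B(0,R)\big)\,\mathrm{d}R=\lambda^n(E)<\infty$, so there is a sequence $R_j\to\infty$ with $\mathcal{H}^{n-1}\big(E^{1}\cap\partial B(0,R_j)\big)\to0$; for a.e.\ $R$ the set $E\cap B(0,R)$ has finite perimeter with
\[
\Per_{h_C}\big(E\cap B(0,R);\mathbb{R}^n\big)\leq\Per_{h_C}(E;\mathbb{R}^n)+\Big(\max_{\mathbb{S}^{n-1}}h_C\Big)\mathcal{H}^{n-1}\big(E^{1}\cap\partial B(0,R)\big),
\]
so once the inequality is known for bounded sets, applying it along $E\cap B(0,R_j)$ and letting $j\to\infty$ (so that $\lambda^n(E\cap B(0,R_j))\to\lambda^n(E)$) yields the general case. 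Alternatively one may use the Knothe--Rosenblatt rearrangement instead of the Brenier map below, which exists for arbitrary absolutely continuous probability measures and makes this reduction unnecessary.

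Next I would let $T\colon\mathbb{R}^n\to\mathbb{R}^n$ be the Brenier optimal transport map pushing the normalised Lebesgue measure $\lambda^n(E)^{-1}\lambda^n\big|_{E}$ forward onto $\lambda^n(C)^{-1}\lambda^n\big|_{C}$. Since $T=\nabla\varphi$ for a convex $\varphi$, its approximate gradient $\nabla T$ is symmetric and positive semidefinite $\lambda^n$-a.e.\ on $E$, and the Monge--Amp\`ere identity $\det\nabla T=\lambda^n(C)/\lambda^n(E)$ holds $\lambda^n$-a.e.\ on $E$. The arithmetic--geometric mean inequality applied to the nonnegative eigenvalues of $\nabla T$ gives
\[
n\,\lambda^n(C)^{1/n}\lambda^n(E)^{-1/n}=n\big(\det\nabla T\big)^{1/n}\leq\operatorname{tr}(\nabla T)=\operatorname{div}T\qquad\lambda^n\text{-a.e.\ on }E,
\]
and integrating over $E$ produces
\[
n\,\lambda^n(C)^{1/n}\lambda^n(E)^{(n-1)/n}\leq\int_E\operatorname{div}T\,\dif^{n}.
\]

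It then remains to bound the right-hand side by $\Per_{h_C}(E;\mathbb{R}^n)$, and this is where I expect the main difficulty to lie. Formally the step is immediate: $T$ is $C$-valued, so for $\mathcal{H}^{n-1}$-a.e.\ $x\in\partial^*E$ one has $T(x)\cdot\nu_E(x)\leq\sup_{y\in C}y\cdot\nu_E(x)=h_C(\nu_E(x))$, whence by the divergence theorem on the set of finite perimeter $E$
\[
\int_E\operatorname{div}T\,\dif^{n}\leq\int_{\partial^*E}T\cdot\nu_E\,\difm^{n-1}\leq\int_{\partial^*E}h_C(\nu_E)\,\difm^{n-1}=\Per_{h_C}(E;\mathbb{R}^n).
\]
The subtlety is that $T$ is only of class $BV_{\mathrm{loc}}$ (being the gradient of a convex function), so the Gauss--Green formula is not available verbatim and one must verify that every passage goes in the favourable direction. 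Two observations make it work: the singular part of the matrix-valued measure $DT$ is positive semidefinite, hence contributes nonnegatively to its trace (so the integrated pointwise divergence is dominated by the distributional one), and the inner trace of $T$ on $\partial^*E$ --- an approximate limit of values of $T$ taken from inside $E$, where $T\in C$ $\lambda^n$-a.e.\ --- again lies in the convex set $C$. Turning these remarks into a rigorous proof (e.g.\ by mollifying $T$, or by composing it with the nearest-point projection onto $C$, and using the convexity of $C$ to keep the approximants $C$-valued) is precisely the technical heart of the transportation proof of the anisotropic isoperimetric inequality; see \cite{Fonseca}.
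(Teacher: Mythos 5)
This theorem is not proved in the paper at all: it is quoted as a known result, the anisotropic (Wulff) isoperimetric inequality, with a pointer to Section~4 of \cite{Fonseca}. So there is no in-paper argument to compare against; what you have done is supply an actual proof, and the one you chose -- Gromov's mass-transportation argument, made rigorous for sets of finite perimeter by Figalli, Maggi and Pratelli -- is correct in all its essentials. The reduction to bounded sets via the coarea formula is sound, the AM--GM step $n(\det\nabla T)^{1/n}\leq\operatorname{tr}(\nabla T)$ together with the Monge--Amp\`ere identity gives the volume term, and you correctly locate the only genuinely delicate point: justifying $\int_E\operatorname{div}T\,\dif^{n}\leq\int_{\partial^*E}h_C(\nu_E)\,\difm^{n-1}$ when $T=\nabla\varphi$ is merely $BV_{\mathrm{loc}}$, which indeed rests on the two facts you name (the singular part of $D^2\varphi$ is a nonnegative matrix-valued measure, so the absolutely continuous divergence underestimates the distributional one; and the interior trace of $T$ on $\partial^*E$ stays in the convex set $C$, so its normal component is at most $h_C(\nu_E)$). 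Two small caveats: the Monge--Amp\`ere equation $\det\nabla T=\lambda^n(C)/\lambda^n(E)$ holds in the Alexandrov sense, i.e.\ for the absolutely continuous part of $D^2\varphi$, which is exactly what the AM--GM step needs but is worth saying; and if you opt for the Knothe map to avoid the boundedness reduction, its Jacobian is triangular rather than symmetric, so AM--GM is applied to its diagonal entries instead of eigenvalues. For comparison, the route closer in spirit to this paper (and essentially Fonseca's) is shorter: approximate $E$ in perimeter by smooth bounded sets, note that for such sets $\Per_{h_C}(E;\mathbb{R}^n)=\lim_{\varepsilon\to0_+}\varepsilon^{-1}\big(V_{E,C}(\varepsilon)-V_{E,C}(0)\big)$, and apply the Brunn--Minkowski inequality $\lambda^n(E\oplus\varepsilon C)^{1/n}\geq\lambda^n(E)^{1/n}+\varepsilon\lambda^n(C)^{1/n}$; this avoids optimal transport entirely, at the price of an approximation step (Reshetnyak continuity), whereas your transport proof is the one that generalizes to quantitative stability estimates.
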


\section{Definitions of $s$-dimensional $C$-anisotropic (Outer) Minkowski Content and $\mathcal{S}$-content}
\label{secDef}
Let us first introduce the notion of \textit{anisotropic tubular neighborhoods.} This notion is further developed in \cite{Villa} and \cite{Rataj}, where we refer the reader for a more detailed exposition.
\begin{definition}
    Let $E, C\subseteq\mathbb{R}^n$ be arbitrary subsets and $r \geq0.$ If $0\in C$, we set $$E_{r, C}\coloneq  E\oplus rC.$$ The set $E_{r, C}$ is called the \textit{$(r, C)$-anisotropic tubular neighbourhood of E.} Moreover, if $C$ is convex, we define for $r>0$ $$E^{\prime}_{r, C}\coloneq \bigcup_{0<s<r}(E\oplus sC).$$ Further, we set $$V_{E, C}(r)\coloneq \mathcal{H}^{n}(E_{r, C}).$$ We will refer to the function $V_{E, C}\colon[0, \infty)\to[0, \infty]$ as the \textit{C-anisotropic volume function of E} in the sequel.
\end{definition}

     In our setting, the set $E$ will be compact most of the time and $C$ will be a convex body of full dimension with $0\in\textup{int}(C)$. In these cases, the sets $\partial E_{r, C}$ and $\partial E^{\prime}_{r, C}$ are finite unions of Lipschitz surfaces \cite[Theorem 3.1]{Villa} for all $r>0$.
    
    We will summarize the properties of the function $V_{E, C}(\cdot)$. The non-obvious ones are proven in \cite[section 5]{Villa}:

Let $E\in\mathcal{K}^n$ and $C\in\mathcal{C}^n_0$. It is plain to see that the function $V_{E, C}(\cdot)$ is increasing and continuous from $[0, \infty)$ to $[0, \infty)$ (as follows from the continuity of the measure $\lambda^n$). Moreover, the left and right hand side derivatives of $V_{E, C}(\cdot)$, denoted by $V_{E, C}^-(\cdot)$ and $V_{E, C}^+(\cdot)$ respectively, exist at each point of $(0, \infty)$; $V_{E, C}^-(\cdot)\geq V_{E, C}^+(\cdot)$ and are continuous from the left and from the right, respectively. Finally, they are given by the formul\ae{} $$V_{E, C}^-(r)=\int\limits_{\partial^* (E_{r, C})}h_C(\nu_{E_{r, C}})\difm^{n-1}+\int\limits_{(E_{r, C})^1\cap \partial (E^{\prime}_{r, C})}\big(h_C(\nu_{E^{\prime}_{r, C}})+h_C(-\nu_{E^{\prime}_{r, C}})\big)\difm^{n-1}$$ and $$V_{E, C}^+(r)=\int\limits_{\partial^* (E_{r, C})}h_C(\nu_{E_{r, C}})\difm^{n-1}.$$ In particular, the function $V_{E, C}(\cdot)$ is first of all of class $\mathcal{C}^1$ on $(0, \infty)\setminus J_{E, C}$, where $J_{E, C}$ is an at most countable set given by $$J_{E, C}\coloneq \left\{r>0;\; \mathcal{H}^{n-1}\big((E_{r, C})^1\cap\partial (E^{\prime}_{r, C})\big)>0\right\},$$ and, secondly, is differentiable at a point $r>0$ if and only if $r\notin J_{E, C}$. In this case, \begin{equation}\label{juj}
    (V_{E, C})^{\prime}(r)=\int\limits_{\partial^* (E_{r, C})}h_{C}\left(\nu_{E_{r, C}}\right)\difm^{n-1}=\textup{Per}_{h_{C}}(E_{r, C}; \mathbb{R}^n).
\end{equation}

From the aforementioned properties, it easily follows that $V_{E, C}(\cdot)$ is \textit{locally absolutely continuous on $[0, \infty)$}, that is, absolutely continuous on each compact interval contained in $[0, \infty)$ (see \cite{Heil}, Theorem 17 and Corollary 20). Formula (\ref{juj}) inspires us to introduce the following definition:
\begin{definition}
    Let $C\in\mathcal{C}^n_0$ and $E\in\mathcal{K}^n$. We define$$S_{E, C}(r)\coloneq \textup{Per}_{h_{C}}(E_{r, C}; \mathbb{R}^n),\;r>0.$$
\end{definition}   

\begin{rem}
     Let $E\in\mathcal{K}^n$ and $C\in\mathcal{C}^n_0$. \begin{itemize}
         \item We always have $E_{r, C}=\{x\in\mathbb{R}^n;\; \textup{dist}_{C}(x, E)\leq r\}$ and $E^{\prime}_{r, C}=\{x\in\mathbb{R}^n;\; \textup{dist}_{C}(x, E)< r\}$ for any $r>0$ and nonempty $E$.
     \item We have $S_{E, C}(r)=V_{E, C}^+(r)$ for any $r>0$. In particular, the function $S_{E, C}(\cdot)$ is continuous from the right.
     \end{itemize} 
\end{rem}\
\begin{lemma}\label{L3}
    For any $C, C^{\prime}\in\mathcal{C}^n_0$, $E\in\mathcal{K}^n$ and $r>0$, we have $$\mathcal{SM}_{C^{\prime}}(E_{r, C}; \mathbb{R}^n)=\textup{Per}_{h_{C^{\prime}}}(E_{r, C}; \mathbb{R}^n).$$ In particular, $$\mathcal{S}_{E, C}(r)=\mathcal{SM}_{C}(E_{r, C}; \mathbb{R}^n).$$
    \begin{proof}
        Since the set $E_{r, C}$ satisfies all assumptions of Theorem \ref{closed}, we deduce that $$\mathcal{SM}_{C^{\prime}}(E_{r, C}; \mathbb{R}^n)=\textup{Per}_{h_{C^{\prime}}}(E_{r, C}; \mathbb{R}^n)+\int\limits_{(E_{r, C})^0\cap\partial(E_{r, C})}\big(h_{C^{\prime}}(\nu_{E_{r, C}})+h_{C^{\prime}}(-\nu_{E_{r, C}})\big)\difm^{n-1}.$$ We will show that $(E_{r, C})^0\cap\partial(E_{r, C})=\emptyset,$ which in turn implies $$\mathcal{SM}_{C^{\prime}}(E_{r, C}; \mathbb{R}^n)=\textup{Per}_{h_{C^{\prime}}}(E_{r, C}; \mathbb{R}^n).$$Given $x_0\in (E_{r, C})^0\cap\partial(E_{r, C})$, we have on the one hand that there exists $y\in E$ so that $$x_0\in y+rC$$ and, on the other hand,  $$\lim_{\varepsilon\to0_+}\frac{\lambda^n\big(E_{r, C}\cap B(x_0, \varepsilon)\big)}{\varepsilon^n}=0,$$ whence $$\lim_{\varepsilon\to0_+}\frac{\lambda^n\big((y+rC)\cap B(x_0, \varepsilon)\big)}{\varepsilon^n}=0,$$ which is a contradiction since each point of the convex body $y+rC$ cannot have density zero.
    \end{proof}
\end{lemma}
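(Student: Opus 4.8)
The plan is to apply Theorem~\ref{closed} to the set $E_{r,C}$ with $C'$ in the role of the convex body, and then to show that the ``extra'' boundary term produced by that theorem vanishes. Once we know $E_{r,C}$ satisfies the hypotheses of Theorem~\ref{closed}, it yields
\[ \mathcal{SM}_{C'}(E_{r,C};\R^n)=\Per_{h_{C'}}(E_{r,C};\R^n)+\int\limits_{(E_{r,C})^0\cap\partial(E_{r,C})}\bigl(h_{C'}(\nu_{E_{r,C}})+h_{C'}(-\nu_{E_{r,C}})\bigr)\difm^{n-1}, \]
so the lemma reduces to proving $\mathcal{H}^{n-1}\bigl((E_{r,C})^0\cap\partial(E_{r,C})\bigr)=0$; in fact I will show this intersection is empty. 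The ``in particular'' statement then follows at once: $\mathcal{S}_{E,C}(r)$ equals $\Per_{h_C}(E_{r,C};\R^n)$ by definition, and taking $C'=C$ in the displayed identity turns this into $\mathcal{SM}_C(E_{r,C};\R^n)$.

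First I would verify the hypotheses of Theorem~\ref{closed} for $E_{r,C}$. It is compact, hence closed, being the Minkowski sum of the compact sets $E$ and $rC$; and by \cite[Theorem~3.1]{Villa} (recalled right after the definition of anisotropic tubular neighbourhoods) its boundary is a finite union of Lipschitz surfaces, so $\partial(E_{r,C})$ is bounded and countably $\mathcal{H}^{n-1}$-rectifiable. For the density condition I would take the probability measure $\eta=\mathcal{H}^{n-1}\big|_{\partial(E_{r,C})}/\mathcal{H}^{n-1}(\partial(E_{r,C}))$ (absolutely continuous with respect to $\mathcal{H}^{n-1}$, the normalising constant being finite and positive whenever $E\neq\emptyset$, which we may assume); the required lower bound $\eta(B(x,\rho))\geq\gamma\rho^{n-1}$ comes from the local Lipschitz-graph structure of $\partial(E_{r,C})$, since the part of such a graph lying in $B(x,\rho)$ projects onto a subset of the tangent hyperplane of $\mathcal{H}^{n-1}$-measure comparable to $\rho^{n-1}$.

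The core of the argument is the claim $(E_{r,C})^0\cap\partial(E_{r,C})=\emptyset$. Let $x_0\in\partial(E_{r,C})$. Since $E_{r,C}$ is closed, $x_0\in E_{r,C}=\bigcup_{y\in E}(y+rC)$, so $x_0\in y+rC$ for some $y\in E$. As $0\in\inter(C)$, the set $rC$, hence $y+rC$, is a convex body with non-empty interior containing $x_0$; fixing a ball $B(z,\delta)\subseteq y+rC$, convexity forces $y+rC$ to contain near $x_0$ a solid cone with apex $x_0$ of positive solid angle, so $\Theta_n(y+rC,x_0)_*>0$. Because $y+rC\subseteq E_{r,C}$ and the lower Lebesgue density is monotone in the set, $\Theta_n(E_{r,C},x_0)_*\geq\Theta_n(y+rC,x_0)_*>0$, whence $x_0\notin(E_{r,C})^0$. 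This proves the claim; the boundary integral above is then an integral over the empty set, and the lemma follows.

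The step I expect to be the main obstacle is the clean verification of the measure-theoretic density hypothesis of Theorem~\ref{closed} --- essentially the uniform $(n-1)$-Ahlfors lower regularity of the surface measure on $\partial(E_{r,C})$. This rests on the Lipschitz structure of $\partial(E_{r,C})$ from \cite{Villa}; alternatively one could read such a bound directly off the ``inner convex body'' description $E_{r,C}=\bigcup_{y\in E}(y+rC)$, which is in fact exactly what makes the density-zero step work and is the genuinely crisp point of the proof.
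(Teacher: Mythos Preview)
Your proposal is correct and follows essentially the same approach as the paper: apply Theorem~\ref{closed} and then kill the extra boundary integral by showing $(E_{r,C})^0\cap\partial(E_{r,C})=\emptyset$ via the inclusion $x_0\in y+rC$ for some $y\in E$ and the positive lower density of a full-dimensional convex body at each of its points. The only difference is that you spell out the verification of the hypotheses of Theorem~\ref{closed} (in particular the measure $\eta$ and the Ahlfors-type lower bound), whereas the paper simply asserts that $E_{r,C}$ satisfies them.
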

Let us introduce \textit{$s$-dimensional anisotropic Minkowski content}. Let us point out that it is a natural extension of Definition \ref{Minkow}.
\begin{definition}\label{def1} Let $C\in\mathcal{C}^n_0$,  $E\subseteq\mathbb{R}^n$ and $s\in [0, n]$. Putting $$\mathcal{M}^{s}_{r, C}(E)\coloneq \frac{V_{E, C}(r)}{r^{n-s}},\;r>0,$$ we define the \textit{$s$-dimensional lower} and \textit{upper $C$-anisotropic Minkowski content of $E$} by \begin{align*}
    \mathcal{M}^s_{C}(E)_*\coloneq \liminf_{r\to0_+}\mathcal{M}^{s}_{r, C}(E)&& \text{and}&&\mathcal{M}^s_{C}(E)^*\coloneq \limsup_{r\to0_+}\mathcal{M}^{s}_{r, C}(E).
\end{align*} Whenever the two quantities are equal, that is, $\mathcal{M}^s_{C}(E)_*=\mathcal{M}^s_{C}(E)^*$, we define the \textit{$s$-dimensional $C$-anisotropic Minkowski content of $E$} by $$\mathcal{M}^s_{C}(E)\coloneq \mathcal{M}^s_{C}(E)^*.$$
\end{definition}
The following lemma shows that the previous definition is of interest only for $s\in (0, n).$
\begin{lemma}\label{inter}
    Let $E\in\mathcal{K}^n$ and $C\in\mathcal{C}^n_0$. Then
        \begin{align*}
            \mathcal{M}^n_{C}(E)=\lambda^n(E)&& \text{and} &&
        \mathcal{M}^0_{C}(E)=\mathcal{H}^0(E)\lambda^n(C).
        \end{align*}
    
    \begin{proof}
      The first assertion follows easily, as
        $$\mathcal{M}^n_{C}(E)_*=\mathcal{M}^n_{C}(E)^*=\lim_{r\to0_+}V_{E, C}(r)=V_{E, C}(0)=\lambda^n(E).$$ To prove the second part, suppose first that $\mathcal{H}^0(E)=k\in\mathbb{N}_0$. We see that in this case $V_{E, C}(r)=kr^n\lambda^n(C)$ for small enough $r$. Hence $$\mathcal{M}^0_{C}(E)_*=\mathcal{M}^0_{C}(E)^*=\lim_{r\to0_+}\frac{V_{E, C}(r)}{r^n}=k\lambda^n(C)=\mathcal{H}^0(E)\lambda^n(C).$$ If, on the other hand, $\mathcal{H}^0(E)=\infty$, we can find for each $k\in\mathbb{N}$ a subset $E_k\subseteq E$ with $\mathcal{H}^0(E_k)=k.$ But then  $$\liminf_{r\to0_+}\frac{V_{E, C}(r)}{r^n}\geq\liminf_{r\to0_+}\frac{V_{E_k, C}(r)}{r^n}=k\lambda^n(C) \text{ for each $k\in\mathbb{N}$}.$$ Hence $\mathcal{M}^0_{C}(E)_*=\mathcal{M}^0_{C}(E)^*=\infty.$
    \end{proof}
\end{lemma}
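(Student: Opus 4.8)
The plan is to treat the two endpoint exponents $s=n$ and $s=0$ separately, in each case reducing the claim to the elementary scaling behaviour of $\lambda^n$ and to the continuity of $V_{E,C}(\cdot)$ recorded above. For $s=n$ I would simply observe that $\mathcal{M}^{n}_{r,C}(E)=V_{E,C}(r)/r^{0}=V_{E,C}(r)$ for every $r>0$, so that the lower and the upper $n$-dimensional contents both coincide with $\lim_{r\to0_+}V_{E,C}(r)$. Since $V_{E,C}(\cdot)$ is continuous on $[0,\infty)$ and $V_{E,C}(0)=\lambda^n(E\oplus\{0\})=\lambda^n(E)$, this limit equals $\lambda^n(E)$, which is the first identity.

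For $s=0$ I would first handle the case $\mathcal{H}^0(E)=k\in\mathbb{N}_0$, writing $E=\{x_1,\dots,x_k\}$ with the points pairwise distinct. Put $\delta\coloneq\min_{i\neq j}|x_i-x_j|>0$ and $R\coloneq\max_{c\in C}|c|<\infty$ (finite because $C$ is bounded). Then for every $r$ with $0<r<\delta/(2R)$ the translated bodies satisfy $x_i+rC\subseteq B(x_i,rR)$ and are therefore pairwise disjoint, whence
$$V_{E,C}(r)=\sum_{i=1}^{k}\lambda^n(x_i+rC)=k\,\lambda^n(rC)=k\,r^n\lambda^n(C),$$
using translation invariance and the homogeneity $\lambda^n(rC)=r^n\lambda^n(C)$. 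Consequently $\mathcal{M}^0_{r,C}(E)=k\lambda^n(C)$ for all sufficiently small $r>0$, and the limit as $r\to0_+$ is $k\lambda^n(C)=\mathcal{H}^0(E)\lambda^n(C)$, as claimed.

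It remains to treat $\mathcal{H}^0(E)=\infty$. Here I would fix an arbitrary $k\in\mathbb{N}$, choose a subset $E_k\subseteq E$ with $\mathcal{H}^0(E_k)=k$, and combine monotonicity of $V_{\cdot,C}(r)$ in the first argument with the previous step: $V_{E,C}(r)\geq V_{E_k,C}(r)=k\,r^n\lambda^n(C)$ for small $r$, so $\liminf_{r\to0_+}V_{E,C}(r)/r^n\geq k\lambda^n(C)$; letting $k\to\infty$ and using $\lambda^n(C)>0$ forces both the lower and the upper $0$-dimensional content to be $+\infty=\mathcal{H}^0(E)\lambda^n(C)$. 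I do not expect a genuine obstacle anywhere; the only point meriting a line of care is the disjointness of the translated copies of $rC$ in the finite case, which is precisely where the boundedness of $C$ (hence finiteness of $R$) and the strict positivity of the minimal pairwise distance $\delta$ are used.
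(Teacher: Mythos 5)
Your proposal is correct and follows essentially the same route as the paper's proof: the $s=n$ case via continuity of $V_{E,C}$ at $0$, the finite case of $s=0$ via the exact formula $V_{E,C}(r)=kr^n\lambda^n(C)$ for small $r$, and the infinite case via monotonicity over finite subsets. The only difference is that you spell out the disjointness of the translated copies of $rC$ (via the bound $R=\max_{c\in C}|c|$ and the minimal gap $\delta$), which the paper leaves implicit.
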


 It is easy to check that $s$-dimensional (anisotropic) Minkowski content is interesting only for $\lambda^n$-negligible sets (since for $s\in[0, n)$ we have the estimate $\mathcal{M}^s_{r, C}(E)\geq \lambda^n(E)r^{s-n}$, from which it follows $\mathcal{M}^s_{C}(E)=\infty$ provided that $\lambda^n(E)>0$). However, we can investigate the convergence of $$\frac{V_{E, C}(r)-V_{E, C}(0)}{r^{n-s}}$$ as $r$ tends to zero from the right. Notice that for $s=n-1$ we obtain precisely $(r, C)$-anisotropic outer Minkowski content of $E$ introduced in the section \nameref{secPrelim}.

 \begin{definition}\label{def2}
     Let $K\in\mathcal{K}^n$, $C\in\mathcal{C}^n_0$ and $s\in[0, n]$. Putting $$\mathcal{SM}^s_{r, C}(E)\coloneq \frac{V_{E, C}(r)-V_{E, C}(0)}{r^{n-s}},\;r>0,$$ we define the \textit{$s$-dimensional lower} and \textit{upper $C$-anisotropic outer Minkowski content of $E$} by \begin{align*}
         \mathcal{SM}^s_{C}(E)_*\coloneq \liminf_{r\to0_+}\mathcal{SM}^{s}_{r, C}(E)&&\text{and}&&\mathcal{SM}^s_{C}(E)^*\coloneq \limsup_{r\to0_+}\mathcal{SM}^{s}_{r, C}(E).
     \end{align*} Whenever the two values are equal, that is, $\mathcal{SM}^s_{C}(E)_*=\mathcal{SM}^s_{C}(E)^*$, we define the \textit{s-dimensional $C$-anisotropic outer Minkowski content of $E$} by $$\mathcal{SM}^s_{C}(E)\coloneq \mathcal{SM}^s_{C}(E)^*.$$
 \end{definition}
\begin{rem} Let us make the following comments.\begin{itemize}
    \item We see that Definition \ref{def1} and Definition \ref{def2} are the same for $E$ with zero Lebesgue measure.
    \item Observe that for open $\Omega\subseteq\mathbb{R}^n$, $E\in\mathcal{K}^n$ with $E\subseteq\Omega$, and $C\in\mathcal{C}^n_0$ we have $\mathcal{SM}_{r, C}(E; \Omega)=\mathcal{SM}^{n-1}_{r, C}(E)$ and $2\,\mathcal{M}_{r, C}(E; \Omega)=\mathcal{M}^{n-1}_{r, C}(E)$ for any $r>0$.
\end{itemize}
\end{rem}
 
\begin{definition}\label{def3} Let $E\in\mathcal{K}^n$, $C\in\mathcal{C}^n_0$ and $s\in [0, n]$. We define the \textit{s-dimensional lower} and \textit{upper C-anisotropic $\mathcal{S}$-content of $E$} for $s<n$ by \begin{align*}
    \mathcal{S}^s_{C}(E)_*\coloneq \liminf_{r\to0_+}\frac{S_{E, C}(r)}{(n-s)r^{n-s-1}}&& \text{and} &&\mathcal{S}^s_{C}(E)^*\coloneq \limsup_{r\to0_+}\frac{S_{E, C}(r)}{(n-s)r^{n-s-1}},
\end{align*} and for $s=n$ we simply set $\mathcal{S}^n_{C}(E)_*=\mathcal{S}^n_{C}(E)^*\coloneq 0.$ Whenever the both values are equal, that is, $\mathcal{S}^s_{C}(E)_*=\mathcal{S}^s_{C}(E)^*$, we define the \textit{s-dimensional C-anisotropic $\mathcal{S}$-content of $E$} by $$\mathcal{S}^s_{C}(E)\coloneq \mathcal{S}^s_{C}(E)^*.$$
\end{definition}
\begin{rem}
  It is possible to ``normalize'' the contents introduced in Definitions \ref{def1}, \ref{def2} and \ref{def3} by multiplying the fractions by $\omega_{n-s}^{-1}$. This ``normalization'' makes sense only in the case of isotropic contents, i.e. $C=B(0, 1)$ (see \cite{Rataj}).
\end{rem}

\section{Minkowski {D}imension and $\mathcal{S}$-dimension}
\label{secDim}
Let us start this section with the following definition.
\begin{definition}
    Let $f: [0, \infty)\to [0, \infty)$ be a continuous function and $n\in\mathbb{N}$. The function $f$ is a \textit{Kneser function} (or \textit{of Kneser type}, respectively) \textit{of order $n$} provided that it satisfies $$f(t b) - f(t a)\leq t^n\big(f(b)-f(a)\big)\quad\text{for any $0<a\leq b$ and any $t\geq 1$.}$$ 
\end{definition}

Kneser functions have the following properties (cf. \cite{Stachó}, Lemma 2). Interestingly, they share these properties with $V_{E, C}(\cdot)$.
\begin{lemma}\label{kneser}
    Any Kneser function $f:[0, \infty)\to[0, \infty)$ is locally absolutely continuous; $f^{\prime}$ exists everywhere, safe for an at most countable set $J_f\subseteq(0, \infty)$; the left and right hand side derivatives of $f$ (denoted by $f^-$ and $f^+$, respectively) exist at each point of $(0, \infty)$; $f^-\geq f^+$; and $f^-$ and $f^+$ are continuous from the left and from the right, respectively.
\end{lemma}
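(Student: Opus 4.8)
The plan is to reduce to the case $n=1$ by a power change of variables and then to show that a Kneser function of order $1$ is necessarily concave; once concavity is available, all five assertions are classical facts about concave functions. I expect the concavity step to be the only real difficulty.

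\emph{Reduction to $n=1$.} Set $p(x):=f(x^{1/n})$ for $x\ge 0$. Writing $T=t^n\ge 1$, $X=a^n$ and $Y=b^n$, the Kneser inequality of order $n$ for $f$ turns into $p(TY)-p(TX)\le T(p(Y)-p(X))$ for all $T\ge 1$ and $0<X\le Y$; thus $p$ is a Kneser function of order $1$. The map $r\mapsto r^n$ is an increasing homeomorphism of $[0,\infty)$, is monotone and Lipschitz on every bounded interval, and is a $C^\infty$-diffeomorphism of $(0,\infty)$ with non-vanishing derivative; hence each conclusion established for $p$ transfers to $f=p\circ(r\mapsto r^n)$. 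Local absolute continuity (including up to $0$) uses the elementary fact that the composition of a locally absolutely continuous function with a monotone locally Lipschitz function is locally absolutely continuous; the countability of $J_f$, the existence of the one-sided derivatives, the inequality $f^-\ge f^+$, and the one-sided continuity of $f^{\pm}$ follow from the chain rule on $(0,\infty)$, which yields $f^{\pm}(r)=nr^{n-1}p^{\pm}(r^n)$. So from now on we take $n=1$.

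\emph{Two preliminary facts.} First, $f$ is non-decreasing. If $f(a)>f(b)$ for some $0<a<b$, put $c:=f(a)-f(b)>0$ and $\rho:=b/a>1$; the Kneser inequality with $t=\rho^k$ gives $f(\rho^k a)-f(\rho^{k+1}a)\ge c\rho^k$, so $f(a)\ge f(a)-f(\rho^m a)\ge c\sum_{k=0}^{m-1}\rho^k\to\infty$ as $m\to\infty$, which contradicts the finiteness of $f$. Second, the Kneser inequality of order $1$ is equivalent to the statement that for every $0<u<v$ the function $t\mapsto(f(tv)-f(tu))/t$ is non-increasing on all of $(0,\infty)$ (for $t<1$ one rescales $u$ and $v$); equivalently, the difference quotient $(f(y)-f(x))/(y-x)$ does not increase when the interval $[x,y]$ is dilated from the origin by any factor $\ge 1$.

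\emph{Concavity, and the conclusion.} The crux is that $f$ is concave on $(0,\infty)$. Granting this, the remaining assertions are standard: $f'$ exists off the at most countable set $J_f$ of corners; $f^+$ and $f^-$ are, respectively, the right- and left-hand limits of the non-increasing function $f'$, so they exist at every point of $(0,\infty)$, satisfy $f^-\ge f^+$, and are right- resp. left-continuous; and $f$ is locally absolutely continuous, even up to $0$, because on $(0,\varepsilon]$ the derivative $f'$ is non-negative and non-increasing with $\int_0^{\varepsilon}f'=f(\varepsilon)-f(0)<\infty$, whence $f(x)=f(0)+\int_0^x f'$. To prove concavity it is enough to show $(f(z)-f(x))/(z-x)\ge(f(y)-f(z))/(y-z)$ for all $0<x<z<y$. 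I would argue by contradiction: if $f$ is not concave on some $[x_0,y_0]$ it lies strictly below the chord $\ell$ over that interval, so $f-\ell$ attains a strictly negative minimum at an interior point $\zeta$, giving $(f(\zeta)-f(x_0))/(\zeta-x_0)<(f(y_0)-f(\zeta))/(y_0-\zeta)$; applying the dilation-monotonicity of the second preliminary fact to the subintervals $[x_0,\zeta]$ and $[\zeta,y_0]$, and using that $f$ is non-decreasing, one propagates this deficit to arbitrarily large dilates and contradicts the Kneser inequality for a suitable pair. (As a consistency check, for $f\in C^2$, differentiating $t\mapsto(f(tv)-f(tu))/t$ at $t=1$ shows that the Kneser inequality of order $1$ is equivalent to $rf''(r)\le 0$ on $(0,\infty)$, i.e. to concavity; the general case follows from the contradiction argument above, or by mollification and passage to the limit.) The main obstacle is precisely this concavity step — upgrading the scale-invariant monotonicity of difference quotients to genuine concavity with no a priori regularity beyond continuity — and one may instead simply follow Stach\'o's argument in \cite{Stachó}.
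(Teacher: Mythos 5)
The paper itself gives no proof of this lemma --- it is quoted verbatim from Stach\'o's Lemma~2 --- so your proposal is a genuine attempt at a self-contained argument, and its architecture is sound. The substitution $p(x)=f(x^{1/n})$ does turn a Kneser function of order $n$ into one of order $1$; a Kneser function of order $1$ is indeed concave on $(0,\infty)$ (equivalently, $f^{+}(r)/r^{n-1}$ is non-increasing, which is the form in which the paper later invokes Stach\'o's Theorem~1 in the proof of Lemma~\ref{ooo}); and the transfer of all five conclusions back to $f$ via $f^{\pm}(r)=nr^{n-1}p^{\pm}(r^n)$, together with your monotonicity argument and the composition-with-a-monotone-Lipschitz-map argument for absolute continuity up to $0$, is correct.

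The gap is exactly where you say the crux is: concavity of an order-$1$ Kneser function. Your contradiction argument does not close. Dilation-monotonicity only gives \emph{upper} bounds on the slopes over the dilated intervals $t[x_0,\zeta]$ and $t[\zeta,y_0]$ separately; it yields no comparison between them, so the ``deficit'' $\frac{f(\zeta)-f(x_0)}{\zeta-x_0}<\frac{f(y_0)-f(\zeta)}{y_0-\zeta}$ is not propagated and no conflict with the Kneser inequality arises. The mollification fallback is also under-specified in a way that matters: additive convolution does not preserve the Kneser property, since $f(tb-y)-f(ta-y)$ is not an increment of the form $f(tb')-f(ta')$ for the same $t$; one must first pass to $F(s)=f(e^s)$, where the condition $F(\beta+\sigma)-F(\alpha+\sigma)\le e^{\sigma}\bigl(F(\beta)-F(\alpha)\bigr)$ is translation-invariant in $(\alpha,\beta)$ and therefore survives convolution. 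The cleanest repair is elementary and stays inside your framework: apply the order-$1$ inequality with $a=x$, $b=m\coloneq\sqrt{xy}$ and $t=\sqrt{y/x}$, which maps $[x,m]$ onto $[m,y]$ and gives $\frac{f(y)-f(m)}{y-m}\le\frac{f(m)-f(x)}{m-x}$, i.e.\ $f(m)\ge L(m)$ where $L$ is the chord of $f$ over $[x,y]$, evaluated at the geometric midpoint. Iterating on the two halves (in the variable $s=\ln r$ these are ordinary dyadic midpoints, hence dense in $[x,y]$) and using continuity yields $f\ge L$ on all of $[x,y]$, hence concavity on $(0,\infty)$. With that step inserted, your proof is complete and is a more self-contained route than the paper's bare citation.
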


A natural question arises: Is the function $V_{E, C}(\cdot)$ of Kneser type of order $n$ for any convex body of full dimension $C\subseteq\mathbb{R}^n$ with $0\in\textup{int}(C)$, and any compact set $E\subseteq\mathbb{R}^n$? A well-known result of Kneser (see \cite{Kneser}) gives an affirmative answer to this question for $C=B(0, 1)$. Few years later, Stachó showed that the volume function is of Kneser type for any $C$ that is open, bounded, central symmetric, and convex (see \cite{Stachó}). 

We will give a proof for a general $C$. But first, let us present Corollary 5.5 from \cite{Villa}:
\begin{lemma}\label{mnm}
    Let $E\in\mathcal{K}^n$, $C\in\mathcal{C}^n_0$, and set $$\kappa_{E, C}(r)\coloneq \frac{1}{r^{n-1}}\int\limits_{\partial^* (E_{r, C})}h_C\left(\nu_{E_{r, C}}\right)\difm^{n-1},\;r>0.$$ Then for any pair $r_0, r_1\in (0, \infty)\setminus J_{E, C}$ with $r_0\leq r_1$ we have $\kappa_{E, C}(r_0)\geq\kappa_{E, C}(r_1)$. Moreover, if $E\neq\emptyset$, we have $$\lim_{r\to\infty}\kappa_{E, C}(r)=n\lambda^n(C).$$
\end{lemma}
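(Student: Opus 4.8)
The plan is to split the claim into (i) monotonicity of $\kappa_{E,C}$ on $(0,\infty)\setminus J_{E,C}$ and (ii) the value of $\lim_{r\to\infty}\kappa_{E,C}(r)$, and to reduce (ii) to (i). First note $\kappa_{E,C}(r)=S_{E,C}(r)/r^{n-1}=V_{E,C}^{+}(r)/r^{n-1}$ for all $r>0$, and $\kappa_{E,C}(r)=(V_{E,C})'(r)/r^{n-1}=\Per_{h_{C}}(E_{r,C};\R^{n})/r^{n-1}$ whenever $r\notin J_{E,C}$; so (i) is really a statement about the growth rate of the anisotropic perimeter of the parallel sets $E_{r,C}$. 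The geometric fact driving it is that each $E_{r,C}$ with $r>0$ satisfies an \emph{interior $rC$-ball condition}: if $x_{0}\in\partial E_{r,C}$ and $y_{0}\in E$ attains $h^{\circ}_{C}(x_{0}-y_{0})=\distC(x_{0},E)=r$, then $y_{0}+rC=\{z:h^{\circ}_{C}(z-y_{0})\le r\}\subseteq E_{r,C}$ and $x_{0}\in\partial(y_{0}+rC)$. (This lemma is, of course, exactly \cite[Corollary~5.5]{Villa}, whose proof runs through the distributional description of $(V_{E,C})''$ obtained there; the argument sketched below is the independent route.)

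For (i), fix $0<r_{0}\le r_{1}$ outside $J_{E,C}$ and put $\lambda:=r_{0}/r_{1}\in(0,1]$; recall that $\partial E_{r_{i},C}$ are Lipschitz hypersurfaces by \cite[Theorem~3.1]{Villa}. For $\mathcal{H}^{n-1}$-a.e.\ $x_{1}\in\partial^{*}E_{r_{1},C}$ there is a unique $y_{1}\in E$ with $h^{\circ}_{C}(x_{1}-y_{1})=r_{1}$, and the inward retraction $G(x_{1}):=\lambda x_{1}+(1-\lambda)y_{1}$ lies on $\partial E_{r_{0},C}$: indeed $h^{\circ}_{C}(G(x_{1})-y_{1})=\lambda r_{1}=r_{0}$ yields $\distC(G(x_{1}),E)\le r_{0}$, while a strict inequality would, by the triangle inequality for the convex gauge $h^{\circ}_{C}$ together with $h^{\circ}_{C}(x_{1}-G(x_{1}))=(1-\lambda)r_{1}$, force $\distC(x_{1},E)<r_{1}$. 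The map $G$ is $\mathcal{H}^{n-1}$-a.e.\ defined and injective on $\partial^{*}E_{r_{1},C}$; it preserves Euclidean unit normals, $\nu_{E_{r_{1},C}}(x_{1})=\nu_{E_{r_{0},C}}(G(x_{1}))$ (both equal the outer normal at the touching point of the supporting body $y_{1}+r_{i}C$, and a homothety about an interior point does not move normal directions, so the two tangent planes are parallel); and its tangential Jacobian obeys $J^{\partial}G\ge\lambda^{n-1}$ $\mathcal{H}^{n-1}$-a.e.: near $x_{1}$ the surface $\partial E_{r_{1},C}$ lies on the outer side of the supporting sphere $\partial(y_{1}+r_{1}C)$, on which $G$ restricts to the ambient homothety of ratio $\lambda$ (rescaling every $(n-1)$-area by exactly $\lambda^{n-1}$), and the variation of $y_{1}=y_{1}(x_{1})$ can only enlarge $J^{\partial}G$, as in the Euclidean case where the metric projection onto a closed set has symmetric positive semidefinite derivative wherever it exists. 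Then, using $h_{C}\ge0$ and the area formula for $G$,
\begin{align*}
S_{E,C}(r_{1})&=\int_{\partial^{*}E_{r_{1},C}}h_{C}(\nu_{E_{r_{1},C}})\,\difm^{n-1}\le\lambda^{-(n-1)}\int_{\partial^{*}E_{r_{1},C}}h_{C}(\nu_{E_{r_{1},C}})\,J^{\partial}G\,\difm^{n-1}\\
&=\lambda^{-(n-1)}\int_{G(\partial^{*}E_{r_{1},C})}h_{C}(\nu_{E_{r_{0},C}})\,\difm^{n-1}\le(r_{1}/r_{0})^{n-1}\,S_{E,C}(r_{0}),
\end{align*}
which rearranges to $\kappa_{E,C}(r_{1})\le\kappa_{E,C}(r_{0})$.

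For (ii), an elementary squeeze handles the easy half: choosing $y_{0}\in E$ and $\rho_{0}>0$ with $B(0,\rho_{0})\subseteq C$ (possible since $0\in\inter(C)$) gives $y_{0}+rC\subseteq E_{r,C}\subseteq y_{0}+\big(r+\diam(E)/\rho_{0}\big)C$, hence $r^{n}\lambda^{n}(C)\le V_{E,C}(r)\le\big(r+\diam(E)/\rho_{0}\big)^{n}\lambda^{n}(C)$ and $V_{E,C}(r)/r^{n}\to\lambda^{n}(C)$ as $r\to\infty$. By (i) the function $\kappa_{E,C}$ is non-increasing off a countable set and nonnegative (because $h_{C}\ge0$), so it decreases to some $L\in[0,\infty)$; since $V_{E,C}$ is locally absolutely continuous with $(V_{E,C})'(t)=\kappa_{E,C}(t)t^{n-1}$ for a.e.\ $t$, we have $V_{E,C}(r)=V_{E,C}(1)+\int_{1}^{r}\kappa_{E,C}(t)t^{n-1}\,dt$, and dividing by $r^{n}$ a Cesàro-type (or l'Hôpital) argument gives $V_{E,C}(r)/r^{n}\to L/n$. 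Comparing the two limits forces $L=n\lambda^{n}(C)$.

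The main obstacle is the Jacobian bound $J^{\partial}G\ge(r_{0}/r_{1})^{n-1}$ together with the $\mathcal{H}^{n-1}$-a.e.\ well-definedness and injectivity of $G$: this requires showing that the set of boundary points of $E_{r_{1},C}$ with non-unique nearest point in $E$ is $\mathcal{H}^{n-1}$-negligible, and carrying out the second-order comparison with the supporting sphere in the anisotropic metric using only the Lipschitz-graph regularity from \cite{Villa}. That is precisely the technical core worked out in \cite[Section~5]{Villa}, which is why quoting \cite[Corollary~5.5]{Villa} directly is the economical option here.
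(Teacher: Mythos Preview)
The paper does not give a proof of this lemma; it simply quotes it as \cite[Corollary~5.5]{Villa}. Your proposal attempts more: part~(ii) is a clean self-contained derivation of the limit at infinity (squeeze to get $V_{E,C}(r)/r^{n}\to\lambda^{n}(C)$, then Ces\`aro/L'H\^opital on $\int_{1}^{r}\kappa_{E,C}(t)t^{n-1}\,dt$), correct once (i) is in hand, and a genuine addition over the bare citation.

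For part~(i) your nearest-point retraction $G(x_{1})=\lambda x_{1}+(1-\lambda)y_{1}(x_{1})$ is a genuinely different route from the one in \cite{Villa}, which (as you yourself note) proceeds through the distributional description of $(V_{E,C})''$. The geometric picture is right, and your verification that $G$ lands in $\partial E_{r_{0},C}$ and is injective wherever the anisotropic nearest point $y_{1}$ is unique is fine (sublinearity of $h^{\circ}_{C}$ gives the needed triangle inequality). But the decisive step---the tangential Jacobian bound $J^{\partial}G\geq\lambda^{n-1}$---is only asserted, by analogy with the Euclidean metric projection (``$Dy_{1}$ symmetric positive semidefinite''). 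For a general $C\in\mathcal{C}^{n}_{0}$ that is neither smooth nor strictly convex, the anisotropic nearest-point map need not even be single-valued on a set of full $\mathcal{H}^{n-1}$-measure on $\partial E_{r_{1},C}$, and where it is differentiable there is no obvious anisotropic substitute for the Euclidean eigenvalue argument. You flag this yourself as ``the main obstacle'' and say it is ``precisely the technical core worked out in \cite[Section~5]{Villa}'', but that section does \emph{not} carry out a retraction/Jacobian analysis; its monotonicity proof goes through an entirely different mechanism (the sign structure of the distributional second derivative of $V_{E,C}$). So you cannot borrow the Jacobian estimate from there. What survives rigorously from your~(i) is therefore the same citation the paper makes; the retraction argument remains a heuristic for general $C$, though it could plausibly be completed under extra regularity assumptions on $C$ (smooth and uniformly convex, say).
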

It is interesting that the limit at infinity is the same for all non-empty compacta $E$. It is quite natural to ask what the limit $\lim_{r\to0_+}\kappa_{E, C}(r)$ is, and we will compute it at the end of this chapter. But let us now return to our study of $V_{E, C}$.
\begin{theorem}
    Let $E\in\mathcal{K}^n$. Then the function $V_{E, C}(\cdot)$ is of Kneser type of order $n$ for any $C\in\mathcal{C}^n_0$.
\end{theorem}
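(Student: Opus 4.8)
The plan is to derive the Kneser inequality directly from the monotonicity of the normalized boundary integral $\kappa_{E,C}$ recorded in Lemma \ref{mnm}, together with the structural facts about $V_{E,C}$ collected just before Definition of $S_{E,C}$: namely that $V_{E,C}$ is locally absolutely continuous on $[0,\infty)$ and that for every $r\in(0,\infty)\setminus J_{E,C}$ — hence for a.e.\ $r>0$ — one has
$$(V_{E,C})'(r)=\int\limits_{\partial^*(E_{r,C})}h_C\!\left(\nu_{E_{r,C}}\right)\difm^{n-1}=r^{n-1}\kappa_{E,C}(r).$$
So the whole statement is an integrated form of ``$\kappa_{E,C}$ is non-increasing''.

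Concretely, I would fix $t\geq 1$ and $b>a>0$ and argue as follows. By local absolute continuity and the fundamental theorem of calculus,
$$V_{E,C}(tb)-V_{E,C}(ta)=\int_{ta}^{tb}(V_{E,C})'(\rho)\,d\rho=\int_{ta}^{tb}\rho^{n-1}\kappa_{E,C}(\rho)\,d\rho,$$
the second equality holding because the two integrands differ only on the Lebesgue-null set $J_{E,C}\cap[ta,tb]$. The substitution $\rho=t\sigma$ turns this into
$$V_{E,C}(tb)-V_{E,C}(ta)=t^{n}\int_{a}^{b}\sigma^{n-1}\kappa_{E,C}(t\sigma)\,d\sigma.$$
Since $t\geq1$ gives $t\sigma\geq\sigma$, Lemma \ref{mnm} yields $\kappa_{E,C}(t\sigma)\leq\kappa_{E,C}(\sigma)$ for every $\sigma$ with $\sigma\notin J_{E,C}$ and $t\sigma\notin J_{E,C}$, i.e.\ for a.e.\ $\sigma\in(a,b)$ (the excluded set is $J_{E,C}\cup t^{-1}J_{E,C}$, which is countable). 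Plugging this estimate in and running the absolute-continuity identity backwards,
$$V_{E,C}(tb)-V_{E,C}(ta)\leq t^{n}\int_{a}^{b}\sigma^{n-1}\kappa_{E,C}(\sigma)\,d\sigma=t^{n}\big(V_{E,C}(b)-V_{E,C}(a)\big),$$
which is exactly \eqref{knes}.

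There is no genuine analytic obstacle here: the entire content of the theorem is imported through Lemma \ref{mnm} (Corollary 5.5 of \cite{Villa}), and the only thing requiring care is the bookkeeping of the at most countable exceptional set $J_{E,C}$ — one must observe that both $J_{E,C}$ and its rescaling $t^{-1}J_{E,C}$ are Lebesgue-null, so that replacing $(V_{E,C})'$ by $r^{n-1}\kappa_{E,C}(r)$ inside the integral and invoking the monotonicity of $\kappa_{E,C}$ off $J_{E,C}$ are both harmless. If one prefers to bypass even this, the same computation goes through verbatim with the right-hand derivative $V_{E,C}^{+}(r)=r^{n-1}\kappa_{E,C}(r)$, which exists at every $r>0$ and equals $(V_{E,C})'$ a.e., but the absolutely continuous formulation above is the cleanest.
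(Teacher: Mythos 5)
Your proof is correct and follows essentially the same route as the paper's: local absolute continuity of $V_{E,C}$, the substitution $\rho=t\sigma$, and the monotonicity of $\kappa_{E,C}$ from Lemma \ref{mnm}. Your explicit handling of the exceptional set $J_{E,C}\cup t^{-1}J_{E,C}$ is a small point of added care that the paper leaves implicit, but it does not change the argument.
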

\begin{proof}
    Fix $b>a>0$. We already know that $V_{E, C}(\cdot)$ is locally absolutely continuous on the interval $[0, \infty)$. Hence, for any $t\geq1$ we have\begin{equation*}
        \begin{split}
            V_{E, C}(t b)-V_{E, C}(t a)&=\int\limits_{t a}^{t b}V_{E, C}^{\prime}(r)\dif^1(r) \\&=\int\limits_{t a}^{t b}\int\limits_{\partial^* (E_{r, C})}h_C\left(\nu_{E_{r, C}}(x)\right)\difm^{n-1}(x)\dif^1(r)\\&=\int\limits_{t a}^{t b}r^{n-1}\left(\frac{1}{r^{n-1}}\int\limits_{\partial^* (E_{r, C})}h_C\left(\nu_{E_{r, C}}(x)\right)\difm^{n-1}(x)\right)\dif^1(r)\\&=\int\limits_{t a}^{t b}r^{n-1}\kappa_{E, C}(r)\dif^1(r)=\int\limits_{a}^{b}(t\rho)^{n-1}\kappa_{E, C}(t \rho) t\dif^1(\rho)\\&=t^n\int\limits_{a}^{b}\rho^{n-1}\kappa_{E, C}(t \rho) \dif^1(\rho)\\&\leq t^n\int\limits_{a}^{b}\rho^{n-1}\kappa_{E, C}(\rho) \dif^1(\rho)= t^n\big(V_{E, C}(b)-V_{E, C}(a)\big),
        \end{split}
    \end{equation*}
    where the inequality is a straightforward consequence of Lemma \ref{mnm}.
\end{proof}

 Observe that if $E\in\mathcal{K}^n$ with zero Lebesgue measure and $C\in\mathcal{C}^n_0$ with $\mathcal{M}^s_{C}(E)^*\in(0, \infty)$ for some $s\in[0, n]$, then we have $\mathcal{M}^s_{C}(E)^*=0$ for any $t\in (s, n]$ and similarly $\mathcal{M}^s_{C}(E)^*=\infty$ for any $t\in[0, s)$. The same observation holds for lower anisotropic Minkowski content and for upper and lower anisotropic $\mathcal{S}$-content. This idea leads to the following definition.
\begin{definition}
    Let $E\in\mathcal{K}^n$ be $\lambda^n$-negligible, $C\in\mathcal{C}^n_0$ and $s\in [0, n]$. We define the \textit{s-dimensional lower} and \textit{upper $C$-anisotropic Minkowski dimension} by $$\dim_{\mathcal{M}}^C(E)_*\coloneq \inf\left\{t\in[0, n],\;\mathcal{M}^t_{C}(E)_*=0 \right\}$$ and $$\dim_{\mathcal{M}}^C(E)^*\coloneq \inf\left\{t\in[0, n],\;\mathcal{M}^t_{C}(E)^*=0 \right\},$$ respectively. If, moreover, the equality $\dim^C_{\mathcal{M}}(E)^*=\dim^C_{\mathcal{M}}(E)_*$  holds, we define the \textit{$C$-anisotropic Minkowski dimension of $E$} by $\dim^C_{\mathcal{M}}(E)\coloneq \dim^C_{\mathcal{M}}(E)^*$.
    
    Similarly, let us define the \textit{s-dimensional lower} and \textit{upper $C$-anisotropic $\mathcal{S}$-dimension} by 
    $$\dim^C_{\mathcal{S}}(E)_*\coloneq \inf\left\{t\in[0, n],\;\mathcal{S}^t_{C}(E)_*=0 \right\}$$ and $$\dim^C_{\mathcal{S}}(E)^*\coloneq \inf\left\{t\in[0, n],\;\mathcal{S}^t_{C}(E)^*=0 \right\},$$ respectively. If, moreover, the equality $\dim^C_{\mathcal{S}}(E)^*=\dim^C_{\mathcal{S}}(E)_*$ holds, we define the \textit{$C$-anisotropic $\mathcal{S}$-dimension of $E$} by $\dim^C_{\mathcal{S}}(E)\coloneq \dim^C_{\mathcal{S}}(E)^*$.
\end{definition}
\begin{rem}
    First, note that the notion of $C$-anisotropic Minkowski dimension does \emph{not} depend on the choice of $C$. Indeed, if $C^{\prime}\in\mathcal{C}^n_0$, we can find positive constants $a$ and $b$ so that $aC\subseteq C^{\prime} \subseteq bC$. Hence, for $E\in\mathcal{K}^n$ with zero measure we have $$a^{n-s}\mathcal{M}^{s}_{ar, C}(E)\leq\mathcal{M}^{s}_{r, C^{\prime}}(E)\leq b^{n-s}\mathcal{M}^{s}_{br, C}(E)\quad\text{for any $r>0.$}$$ So, we will write simply $\dim_{\mathcal{M}}$ instead of $\dim^C_{\mathcal{M}}$ and the modifier \emph{$C$-anisotropic} will be omitted. The question is whether the notion of $C$-anisotropic $\mathcal{S}$-dimension depends on $C$ or not. We will see that the notion of upper $C$-anisotropic $\mathcal{S}$-dimension does not depend on $C$, which in turn implies that if $\dim^C_{\mathcal{S}}(E)$ and $\dim^{C^{\prime}}_{\mathcal{S}}(E)$ exist for some $C, C^{\prime}\in\mathcal{C}^n_0$, then the dimensions must, in fact, be equal. 
\end{rem}
Let us prove an analogue of Proposition 3.1 from \cite{Rataj} (including Corollary 3.2). Its proof is a slight modification of that in \cite{Rataj}.
\begin{lemma}\label{lemma16}
    Let $f\colon[0, \infty)\to[0, \infty)$ be a Kneser function of order $n$ and let $h\colon[0, \infty)\to[0, \infty)$ be a continuously differentiable function with $h(0)=0$. Assume that $h^{\prime}$ is non-zero in some neighborhood on the right of $0$. Let \begin{align*}
        S_*\coloneq \liminf_{r\to0_+}\frac{f^{+}(r)}{h^{\prime}(r)}&& \text{and} &&S^*\coloneq \limsup_{r\to0_+}\frac{f^{+}(r)}{h^{\prime}(r)}.
    \end{align*} Then \begin{equation}\label{equa}
        S_*\leq\liminf_{r\to0_+}\frac{f(r)-f(0)}{h(r)}\leq\limsup_{r\to0_+}\frac{f(r)-f(0)}{h(r)}\leq S^*.
    \end{equation} In particular, if $E\in\mathcal{K}^n$, $C\in\mathcal{C}^n_0$, and $s\in[0, n]$, we get the following chain of inequalities: $$\mathcal{S}^s_{C}(E)_*\leq\mathcal{SM}^s_{C}(E)_*\leq\mathcal{SM}^s_{C}(E)^*\leq\mathcal{S}^s_{C}(E)^*.$$ If, moreover, $\lambda^n(E)=0$, then $$\mathcal{S}^s_{C}(E)_*\leq\mathcal{M}^s_{C}(E)_*\leq\mathcal{M}^s_{C}(E)^*\leq\mathcal{S}^s_{C}(E)^*.$$
\end{lemma}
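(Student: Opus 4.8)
The plan is to prove the analytic inequality \eqref{equa} first and then deduce the two chains of inequalities by specializing $f$ and $h$. For the analytic part, recall from Lemma~\ref{kneser} that a Kneser function $f$ of order $n$ is locally absolutely continuous with one-sided derivatives everywhere on $(0,\infty)$, so that $f(r)-f(0)=\int_0^r f^+(\rho)\,\dif^1(\rho)$ (the countable set $J_f$ where $f^-\neq f^+$ is $\dif^1$-null, so it is irrelevant to the integral). The key structural fact I would extract from the Kneser condition is that the ratio $\rho\mapsto f^+(\rho)/\rho^{n-1}$ is non-increasing on $(0,\infty)\setminus J_f$; this is exactly the analogue of Lemma~\ref{mnm} and follows by the same differentiation-of-the-Kneser-inequality argument (divide $f(tb)-f(ta)\le t^n(f(b)-f(a))$ appropriately and let $b\downarrow a$). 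Actually, for \eqref{equa} I only need the weaker consequence that $f^+$ itself, being a right derivative of a Kneser function, interacts well with rescaling, but monotonicity of $f^+(\rho)/\rho^{n-1}$ is the clean handle.

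For the upper bound in \eqref{equa}: fix $\varepsilon>0$ and choose $\delta>0$ so that $f^+(\rho)\le (S^*+\varepsilon)h'(\rho)$ for a.a.\ $\rho\in(0,\delta)$ (possible since $\limsup_{r\to0_+}f^+(r)/h'(r)=S^*$ and $h'>0$ near $0$). Integrating from $0$ to $r<\delta$ gives $f(r)-f(0)\le (S^*+\varepsilon)(h(r)-h(0))=(S^*+\varepsilon)h(r)$, using $h(0)=0$; dividing by $h(r)>0$ and letting $r\to0_+$ then $\varepsilon\to0_+$ yields $\limsup_{r\to0_+}(f(r)-f(0))/h(r)\le S^*$. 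The lower bound $S_*\le\liminf_{r\to0_+}(f(r)-f(0))/h(r)$ is entirely symmetric, replacing $S^*+\varepsilon$ by $S_*-\varepsilon$ (when $S_*>0$; the case $S_*=0$ is trivial since the middle quantities are nonnegative). The middle inequality $\liminf\le\limsup$ is automatic. One subtlety I would be careful about: $h'$ is assumed non-zero only in \emph{some} right-neighborhood of $0$, and $h$ is continuously differentiable with $h(0)=0$, so $h$ is strictly monotone near $0$ and $h(r)\ne 0$ for small $r>0$, which is what makes the divisions legitimate; I should state this explicitly.

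For the specializations: take $f(r)=V_{E,C}(r)$, which is a Kneser function of order $n$ by the theorem just proved, and note $f(0)=V_{E,C}(0)=\lambda^n(E)$ and $f^+(r)=V_{E,C}^+(r)=S_{E,C}(r)$. With $h(r)=r^{n-s}$ (so $h(0)=0$, $h'(r)=(n-s)r^{n-s-1}$, non-zero on $(0,\infty)$ when $s<n$), the ratio $f^+(r)/h'(r)=S_{E,C}(r)/((n-s)r^{n-s-1})$ has $\liminf,\limsup$ equal to $\mathcal{S}^s_C(E)_*,\mathcal{S}^s_C(E)^*$ by Definition~\ref{def3}, while $(f(r)-f(0))/h(r)=(V_{E,C}(r)-V_{E,C}(0))/r^{n-s}=\mathcal{SM}^s_{r,C}(E)$, giving the first chain $\mathcal{S}^s_C(E)_*\le\mathcal{SM}^s_C(E)_*\le\mathcal{SM}^s_C(E)^*\le\mathcal{S}^s_C(E)^*$ for $s\in[0,n)$; the case $s=n$ holds trivially since $\mathcal{S}^n_C(E)_*=\mathcal{S}^n_C(E)^*=0$ and $\mathcal{SM}^n_{r,C}(E)=V_{E,C}(r)-V_{E,C}(0)\to0$. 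If additionally $\lambda^n(E)=0$, then $f(0)=0$, so $(f(r)-f(0))/h(r)=V_{E,C}(r)/r^{n-s}=\mathcal{M}^s_{r,C}(E)$ and the very same inequality now reads as the second chain.

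**Main obstacle.** The only non-routine point is justifying the reduction to the clean differential inequality $f^+(\rho)\le(S^*+\varepsilon)h'(\rho)$ for a.a.\ $\rho$ near $0$: I must argue that $\limsup_{r\to0_+}f^+(r)/h'(r)=S^*$ together with continuity of $h'$ and positivity of $h'$ near $0$ indeed forces $f^+(\rho)/h'(\rho)\le S^*+\varepsilon$ for \emph{all} small $\rho$ (not merely along a sequence) — this is immediate from the definition of $\limsup$, but it is where the monotonicity/regularity of the Kneser function $f^+$ is implicitly used to pass from the one-sided-derivative inequality back to an integral inequality via local absolute continuity. Everything else is bookkeeping with the definitions.
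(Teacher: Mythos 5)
Your proof is correct, and the specializations ($f=V_{E,C}$, $h(r)=r^{n-s}$, $f^+=S_{E,C}$, $f(0)=\lambda^n(E)$, with the trivial case $s=n$ handled separately) match the paper exactly. The analytic core differs slightly in mechanism, though not in substance: the paper runs a Cauchy--mean-value-type argument with the auxiliary function $\Phi(t)=\bigl(f(r)-f(0)\bigr)h(t)-h(r)f(t)$ on $[0,r]$, using $\Phi(r)=\Phi(0)$ and $\int_0^r\Phi'=0$ to produce, for every $r$, intermediate points $t_1(r),t_2(r)\in(0,r)$ sandwiching $\bigl(f(r)-f(0)\bigr)/h(r)$ between values of $f^+/h'$; you instead fix $\varepsilon>0$, use the definition of $\limsup$ to get the pointwise bound $f^+(\rho)\le(S^*+\varepsilon)h'(\rho)$ on a whole right-neighbourhood of $0$, and integrate it via local absolute continuity. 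Both routes rest on the same regularity package from Lemma \ref{kneser} (local absolute continuity, $f'=f^+$ off a countable set), and your worry in the ``main obstacle'' paragraph is unfounded: no monotonicity of $f^+(\rho)/\rho^{n-1}$ is needed anywhere, and indeed neither proof uses the Kneser inequality beyond the regularity it guarantees. The paper's version has the minor aesthetic advantage of producing the sandwich for each individual $r$ without an $\varepsilon$-bookkeeping step; yours is arguably more direct. Your explicit remarks that $h'>0$ (not merely nonzero) near $0$ and hence $h(r)>0$ for small $r>0$, and your treatment of the degenerate cases $S_*=0$ and $s=n$, are correct and slightly more careful than the paper's write-up.
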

\begin{proof} First, observe that the case $s=n$ is trivial. So, we will suppose that $s<n$.

    We claim that for each $r>0$ there exist $0< t_1(r), t_2(r) <r$ so that \begin{equation}\label{ukuk}
        \frac{f^{+}\big(t_1(r)\big)}{h^{\prime}\big(t_1(r)\big)}\leq\frac{f(r)-f(0)}{h(r)}\leq\frac{f^{+}\big(t_2(r)\big)}{h^{\prime}\big(t_2(r)\big)}.
    \end{equation} To prove this, fix a positive number $r$ and define $$\Phi(t)\coloneq \big(f(r)-f(0)\big)h(t)-h(r)f(t),\;t\in[0, r].$$ The function $\Phi(\cdot)$ is absolutely continuous on the interval $[0, r]$ and $\Phi(r)=\Phi(0).$ It follows $$\int\limits_{0}^{r}\Phi^{\prime}(t)\dif^1(t)=\Phi(r)-\Phi(0)=0.$$ Consequently, either $\Phi^{\prime}=0$ $\lambda^1$-a.e. in $(0, r)$, or there exist $t_1(r), t_2(r)$ in $(0, r)$ with $\Phi^{\prime}\big(t_1(r)\big)>0>\Phi^{\prime}\big(t_2(r)\big)$, from which we conclude (\ref{ukuk}). 
    
    Now $$\limsup_{r\to0_+}\frac{f(r)-f(0)}{h(r)}\leq \limsup_{r\to0_+}\frac{f^{+}\big(t_2(r)\big)}{h^{\prime}\big(t_2(r)\big)}\leq S^*, $$ and similarly for the other inequality.\medskip

    To prove the second part, set $h(t)\coloneq t^{n-s}$ and $f(t)\coloneq V_{E, C}(t)$ for $t\geq0$. We obtain \begin{equation*}
        \mathcal{S}^s_{C}(E)_*=\liminf_{r\to0_+}\frac{S_{E, C}(r)}{(n-s)r^{n-s-1}}\leq\liminf_{r\to0_+}\frac{V_{E, C}(r)-V_{E, C}(0)}{r^{n-s}}=\mathcal{SM}^s_{C}(E)_*.
    \end{equation*} The other inequality can be shown by analogy.
\end{proof}

Let us mention that Lemma \ref{lemma16} implies that the existence of $s$-dimensional $C$-anisotropic $\mathcal{S}$-content ensures the existence of $s$-dimensional $C$-anisotropic outer Minkowski content (and the values of both contents coincide).  
We will show that this is not always the case. Towards this end, we will compute the upper and lower $C$-anisotropic Minkowski content and $\mathcal{S}$-content for the \textit{Sierpinski gasket} similarly as in \cite{Rataj}, Example 3.3, and we will see that all the inequalities from the last part of Lemma \ref{lemma16} are in this case strict.

\begin{example}
    Let $D\coloneq \log_2(3)$ and $E\subseteq\mathbb{R}^2$ be the Sierpinski gasket, i.e. the self-similar set generated by the three similarities \begin{align*}
        \Phi_1(x)\coloneq \frac{1}{2} x,&& \Phi_2(x)\coloneq \frac{1}{2} x + \frac{1}{2}(1, 0)&& \text{and} &&\Phi_3(x)\coloneq \frac{1}{2} x + \left(\frac{1}{4}, \frac{\sqrt{3}}{4}\right).
    \end{align*} The set $E$ is compact, $\lambda^2$-negligible, and its Minkowski dimension is equal to $D$ (from the computation below it is clear that its $C$-anisotropic $\mathcal{S}$-dimension is equal to $D$ as well).
    
    Let us compute the $D$-dimensional lower and upper $C$-anisotropic Minkowski content and $\mathcal{S}$-content of $E$ for any $C\in\mathcal{C}^n_0$. We will see, similarly as in \cite{Rataj}, Example 3.3, that all of the inequalities from the last part of Lemma \ref{lemma16} can be strict. 

    Consider first an equilateral triangle $T$ in $\mathbb{R}^2$ with side length $s>0$. Further, let $v_1, v_2, v_3$ be the outward pointing normals to its sides. For the sake of simplicity, set $$u_1\coloneq h_C(-v_1)+h_C(-v_2)+h_C(-v_3),$$ $$u_2\coloneq h_C(v_1)+h_C(v_2)+h_C(v_3).$$ Then $$V_{T, C}(r)=\lambda^2(C)r^2+ s u_2 r+\frac{\sqrt{3}}{4}s^2,\; r\geq0$$and $$V_{\partial T, C}(r)=\left(\lambda^2(C)-\frac{u_1^2}{\sqrt{3}}\right)r^2+ \left(s u_1+s u_2\right) r\;\text{for all }r\in \left[0, \frac{\sqrt{3}s}{2 u_1}\right].$$ 

    We will now find the functions $V_{E, C}(\cdot)$ and $S_{E, C}(\cdot)$. Let $v_1, v_2, v_3$ be the outward pointing normals to the sides of the largest triangle the boundary of which is contained in $E$. Define $I_n \coloneq \left[ \frac{ 2^{-n-2}\sqrt{3}}{u_2}, \frac{ 2^{-n-1}\sqrt{3}}{u_2}\right)$ for $n\in\mathbb{N}$ and $I_0\coloneq \left[ \frac{\sqrt{3}}{4 u_2}, \infty\right)$. It can be easily deduced from the formul\ae{} for $V_{T, C}$ and $V_{\partial T, C}$ that $$V_{E, C}(r)=\lambda^2(C)r^2+ u_2 r+\frac{\sqrt{3}}{4},\;r\in I_0$$ and, more importantly, that for $n\in \mathbb{N}$ and $r\in I_n$ $$V_{E, C}(r)=\left(\lambda^2(C)-\frac{(3^n-1)u_2^2}{2\sqrt{3}}\right)r^2+ \left(\frac{3}{2}\right)^n u_2 r+\frac{\sqrt{3}}{4}\left(\frac{3}{4}\right)^n.$$ From these formul\ae{} we can deduce that $$S_{E, C}(r)=2\lambda^2(C)r+  u_2 \text{ for each } r\in I_0$$ and $$S_{E, C}(r)=\left(2\lambda^2(C)-\frac{(3^n-1)u_2^2}{\sqrt{3}}\right)r+ \left(\frac{3}{2}\right)^n u_2 \text{ for each } r\in I_n.$$

    Let $t_n(\alpha)\coloneq \frac{\alpha \sqrt{3}}{4 u_2}2^{-n}$, where $\alpha\in [1, 2)$, be a parametrization of the interval $I_n$. It follows that $t_n(\alpha)^{1-D}=\left(\frac{\alpha \sqrt{3}}{4 u_2}\right)^{1-D}\left(\frac{3}{2}\right)^n$. Now \begin{equation*}
    \begin{split}
         \frac{S_{E, C}\big(t_n(\alpha)\big)}{t_n(\alpha)^{1-D}}&= \left(\frac{\sqrt{3}}{4 u_2}\right)^D\left(\frac{2\sqrt{3} \lambda^2(C)+u_2^2}{3^n\sqrt{3}}-\frac{u_2^2}{\sqrt{3}}\right)\alpha^D + u_2\left(\frac{\sqrt{3}}{4 u_2}\right)^{D-1}\alpha^{D-1}\\&=c_n \alpha^D+b\alpha^{D-1},
    \end{split}
    \end{equation*}
    where we set \begin{align*}
        c_n\coloneq \left(\frac{\sqrt{3}}{4 u_2}\right)^D\left(\frac{2\sqrt{3} \lambda^2(C)+u_2^2}{3^n\sqrt{3}}-\frac{u_2^2}{\sqrt{3}}\right)&&\text{and}&&b\coloneq u_2\left(\frac{\sqrt{3}}{4 u_2}\right)^{D-1}.
    \end{align*} For the sake of brevity, define $f_n(\alpha)\coloneq c_n \alpha^D+b\alpha^{D-1}$ for $\alpha\in[1, 2]$. We can choose a sequence $\{\alpha_n\}_{n=1}^{\infty}\in [1, 2]^\mathbb{N}$ so that for each $n\in\mathbb{N}$ the continuous function $f_n$ attains its maximum at $\alpha_n$. As a consequence, we get \begin{equation}\label{plk}
        \lim_{n\to\infty}f_n(\alpha_n)=\lim_{n\to\infty}\frac{S_{E, C}\big(t_n(\alpha_n)\big)}{t_n(\alpha_n)^{1-D}}=(2-D)\mathcal{S}^D_C(E)^*.
    \end{equation}

Consider a function $f\colon\mathbb{R}^2\to\mathbb{R}$ given by the formula $f(x, y)\coloneq x^Dy+x^{D-1}b$. Further, set $c\coloneq \lim_{n\to\infty}c_n=-\sqrt{3}^{D-1}4^{-D}u_2^{2-D}$. We see that $\alpha_n\xrightarrow{n\to\infty}\alpha_{\textup{max}}$, where $\alpha_{\max}$ is the unique point in $[1, 2]$ where the maximum of the function $f(\cdot, c)$ is attained. It can be easily shown that $\alpha_{\textup{max}}=4\left(1-\frac{1}{D}\right)$. Since the function $f$ is continuous, we have $$\lim_{n\to\infty}f_n(\alpha_n)=\lim_{n\to\infty}f(\alpha_n, c_n)=f(\alpha_{\textup{max}}, c).$$ Using this and (\ref{plk}), we obtain \begin{equation*}
    \begin{split}
        \mathcal{S}^D_{C}(E)^*=\frac{f(\alpha_{\textup{max}}, c)}{(2-D)}=\frac{\alpha_{\textup{max}}^D c+\alpha_{\textup{max}}^{D-1}b}{(2-D)}=\frac{\left(\sqrt{3}-\frac{\sqrt{3}}{D}\right)^{D-1}u_2^{2-D}}{D(2-D)}\approx 1.170 u_2^{2-D}.
    \end{split}
\end{equation*}

        Let us find a sequence $\{\alpha^{\prime}_n\}_{n=1}^{\infty}\in [1, 2]^\mathbb{N}$ such that for each $n\in\mathbb{N}$ the function $f_n$ attains its minimum at $\alpha^{\prime}_n$. We can repeat the procedure from the preceding paragraph. It turns out that $\alpha^{\prime}_n\xrightarrow{k\to\infty}\alpha_{\textup{min}}=1$, where $\alpha_{\min}$ is the unique point in $[0, 1]$ where the function $f(\cdot, c)$ attains its minimum, and so \begin{equation*}
    \begin{split}
        \mathcal{S}^D_{C}(E)_*=\frac{f(\alpha_{\textup{min}}, c)}{(2-D)}=\frac{c+b}{(2-D)}=\frac{\sqrt{3}^{D-1}u_2^{2-D}}{4^D(2-D)}\approx 1.107 u_2^{2-D}.
    \end{split}
\end{equation*}
From this we immediately see that the $D$-dimensional $C$-anisotropic $\mathcal{S}$-content of $E$ does not exist.

Let us now compute both the lower and upper $D$-dimensional $C$-anisotropic Minkowski content of $E$. Notice first that $t_n(\alpha)^{2-D}=\left(\frac{\alpha \sqrt{3}}{4 u_2}\right)^{2-D}\left(\frac{3}{4}\right)^n$. We have the expression
\begin{equation*}
    \begin{split}
        \frac{V_{E, C}\big(t_n(\alpha)\big)}{t_n(\alpha)^{2-D}}&=\left(\frac{\sqrt{3}}{4 u_2}\right)^D\left(\frac{2\sqrt{3} \lambda^2(C)+u_2^2}{3^n2\sqrt{3}}-\frac{u_2^2}{2\sqrt{3}}\right)\alpha^D + u_2\left(\frac{\sqrt{3}}{4 u_2}\right)^{D-1}\alpha^{D-1}\\&+u_2\left(\frac{\sqrt{3}}{4 u_2}\right)^{D-1}\alpha^{D-2}\\&=\frac{1}{2}c_n \alpha^D+b\alpha^{D-1}+b\alpha^{D-2}. 
    \end{split}
\end{equation*}
Now, define $h_n(\alpha)\coloneq \frac{1}{2}c_n \alpha^D+b\alpha^{D-1}+b\alpha^{D-2}$ for $\alpha\in[1, 2]$ and for $n\in\mathbb{N}$ and $h(x, y)\coloneq  \frac{1}{2}x^D y+bx^{D-1}+bx^{D-2}$ for any $(x, y)\in\mathbb{R}^2$. Similarly as before, we can choose a sequence $\{\beta_n\}_{n=1}^{\infty}\in [1, 2]^\mathbb{N}$ so that for each $n\in\mathbb{N}$ the continuous function $h_n$ attains its maximum at $\beta_n$. Let $\beta_{\max}$ be the point in $[1, 2]$ where the continuous function $h(\cdot, c)$ attains its maximum. We obtain 
\begin{equation*}
        \mathcal{M}^D_{C}(E)^*=\lim_{n\to\infty}\frac{V_{E, C}\big(t_n(\beta_n)\big)}{t_n(\beta_n)^{2-D}}=\lim_{n\to\infty}h_n(\beta_n)=h(\beta_{\max}, c).
    \end{equation*}
It holds true that $\beta_{\max}=\frac{4}{D}\left(D-1+\sqrt{\frac{3}{2}D^2-3D+1}\right),$ whence \begin{equation*}
\begin{split}
    \mathcal{M}^D_{C}(E)^*=h(\beta_{\max}, c)\approx1.150 u_2^{2-D}.
\end{split}
\end{equation*}
In the case of $\mathcal{M}^D_{C}(E)_*$, we can argue the same way. The minimum of the function $h(\cdot, c)$ in $[1, 2]$ is attained at the point $\beta_{\min}=\frac{4}{D}\left(D-1-\sqrt{\frac{3}{2}D^2-3D+1}\right)$. So, \begin{equation*}
\begin{split}
    \mathcal{M}^D_{C}(E)_*=h(\beta_{\min}, c)\approx1.148 u_2^{2-D}.
\end{split}
\end{equation*}
From this, it follows that the $D$-dimensional $C$-anisotropic Minkowski content of $E$ does not exist.

\medskip
In summary, we see that all the inequalities from the last part of Lemma \ref{lemma16} are in this case strict, i.e. $$\mathcal{S}^D_{C}(E)_*<\mathcal{M}^D_{C}(E)_*<\mathcal{M}^D_{C}(E)^*<\mathcal{S}^D_{C}(E)^*.$$ 
\end{example}

We will prove similar inequalities as in \cite{Rataj} (Lemma 3.5 and Proposition 3.7).

\begin{lemma}\label{ooo}
    Let $f:[0, \infty)\to[0, \infty)$ be a Kneser function of order $n$. Then for any $s\in[0, n)$ we have $$\limsup_{r\to0_+}\frac{f(r)-f(0)}{r^{n-s}}\geq\frac{n-s}{n}\limsup_{r\to0_+}\frac{f^{+}(r)}{(n-s)r^{n-s-1}}.$$ In particular, if $E\in\mathcal{K}^n$ and $C\in\mathcal{C}^n_0$, then $$\mathcal{SM}^s_C(E)^*\geq \frac{n-s}{n}\mathcal{S}^s_C(E)^*.$$ If, in addition, $\lambda^n(E)=0$, we have $$\mathcal{M}^s_C(E)^*\geq \frac{n-s}{n}\mathcal{S}^s_C(E)^*.$$ Consequently, $\dim_\mathcal{M}(E)^*=\dim^C_\mathcal{S}(E)^*$ provided that $\lambda^n(E)=0$, and hence the value of  $\dim^C_\mathcal{S}(E)^*$ does not depend on the choice of $C$.
\end{lemma}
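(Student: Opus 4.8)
The plan is to reduce the statement to two facts about a Kneser function $f$ of order $n$: first, that it is locally absolutely continuous with $f(r)-f(0)=\int_0^r f^{+}(\rho)\,d\rho$ (Lemma \ref{kneser}); and second, that $\rho\mapsto f^{+}(\rho)/\rho^{n-1}$ is non-increasing on $(0,\infty)$. I would prove the second fact directly from the defining inequality: for a fixed $t\ge 1$ the function $g(r):=t^{n}f(r)-f(tr)$ is non-decreasing on $(0,\infty)$, since $g(b)\ge g(a)$ is just a rearrangement of $f(tb)-f(ta)\le t^{n}\bigl(f(b)-f(a)\bigr)$; hence its right derivative $g^{+}(r)=t^{n}f^{+}(r)-tf^{+}(tr)$, which exists at every point because $f^{+}$ does (Lemma \ref{kneser}), is non-negative. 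This gives $f^{+}(tr)\le t^{n-1}f^{+}(r)$, i.e.\ $f^{+}(R)/R^{n-1}\le f^{+}(r)/r^{n-1}$ whenever $R\ge r>0$. As a byproduct, letting $R\to\infty$ forces $f^{+}\ge 0$ (otherwise $f(R)\to-\infty$, contradicting $f\ge 0$), so $f$ is non-decreasing; in particular the claimed inequality is trivial when its right-hand side is $0$.

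Next I would carry out the main estimate. Write $q:=n-s\in(0,n]$ and $L:=\limsup_{r\to 0_+}f^{+}(r)/r^{q-1}$; the case $L=0$ being done, assume $L>0$ and fix $\ell\in(0,L)$. Pick $r_k\to 0_+$ with $f^{+}(r_k)>\ell\,r_k^{q-1}$. By the monotonicity of $f^{+}(\rho)/\rho^{n-1}$, for every $\rho\in(0,r_k]$ we have $f^{+}(\rho)\ge \rho^{n-1}f^{+}(r_k)/r_k^{n-1}>\ell\,r_k^{q-n}\rho^{n-1}$, and integrating over $(0,r_k)$ yields $f(r_k)-f(0)\ge \ell\,r_k^{q-n}\cdot r_k^{n}/n=\tfrac{\ell}{n}\,r_k^{q}$. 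Hence $\limsup_{r\to 0_+}\bigl(f(r)-f(0)\bigr)/r^{q}\ge \ell/n$, and letting $\ell\uparrow L$ gives $\limsup_{r\to 0_+}\bigl(f(r)-f(0)\bigr)/r^{n-s}\ge \tfrac1n L=\tfrac{n-s}{n}\limsup_{r\to 0_+}f^{+}(r)/\bigl((n-s)r^{n-s-1}\bigr)$.

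The application is then immediate: taking $f=V_{E,C}$, which is Kneser of order $n$ by the theorem above, and using $V_{E,C}^{+}=S_{E,C}$ together with $V_{E,C}(0)=\lambda^{n}(E)$ gives $\mathcal{SM}^s_C(E)^*\ge\frac{n-s}{n}\mathcal{S}^s_C(E)^*$; when $\lambda^{n}(E)=0$ the numerators of $\mathcal{SM}^s_{r,C}(E)$ and $\mathcal{M}^s_{r,C}(E)$ coincide, so also $\mathcal{M}^s_C(E)^*\ge\frac{n-s}{n}\mathcal{S}^s_C(E)^*$. For the dimension statement I would combine this with $\mathcal{M}^s_C(E)^*\le\mathcal{S}^s_C(E)^*$ from Lemma \ref{lemma16}: for $s\in[0,n)$ the two inequalities show $\mathcal{M}^s_C(E)^*=0$ if and only if $\mathcal{S}^s_C(E)^*=0$, while at $s=n$ both vanish since $\lambda^{n}(E)=0$; hence the two defining sets of exponents agree and $\dim_{\mathcal{M}}(E)^*=\dim^{C}_{\mathcal{S}}(E)^*$, and independence of $C$ follows because $\dim_{\mathcal{M}}$ was already seen not to depend on $C$. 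The only delicate point is obtaining the monotonicity of $\rho\mapsto f^{+}(\rho)/\rho^{n-1}$ from the abstract Kneser inequality alone (rather than quoting the geometric Lemma \ref{mnm}); the resolution is to differentiate $t^{n}f(r)-f(tr)$ from the right, which is legitimate precisely because $f^{+}$ exists everywhere. The remaining steps are a one-line integration and routine bookkeeping with $\limsup$'s.
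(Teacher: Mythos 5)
Your proof is correct, but it takes a somewhat different route from the paper's. Both arguments hinge on the same key fact — that $\rho\mapsto f^{+}(\rho)/\rho^{n-1}$ is non-increasing — but you obtain it in a self-contained way from the Kneser inequality (observing that $r\mapsto t^{n}f(r)-f(tr)$ is non-decreasing and differentiating from the right), whereas the paper simply quotes Stach\'o's Theorem~1 for this monotonicity. The main estimate is also organized differently: the paper picks a sequence $r_i\searrow 0$ realizing the $\limsup$ of $f'(r)/\bigl((n-s)r^{n-s-1}\bigr)$ over differentiability points, splits $f(r_i)-f(0)=\sum_{j\ge i}\int_{r_{j+1}}^{r_j}f'$, bounds each piece via the monotonicity, and telescopes the resulting series; you instead bound $f^{+}(\rho)\ge \ell\,r_k^{q-n}\rho^{n-1}$ on all of $(0,r_k]$ at once and perform a single integration, which yields the same lower bound $\tfrac{\ell}{n}r_k^{q}$ with less bookkeeping. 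Your version is arguably cleaner and avoids the series manipulation, at the cost of re-proving the monotonicity that the paper outsources; the remaining steps (identifying $f^{+}=S_{E,C}$, the case $\lambda^n(E)=0$, and the equality of upper dimensions via the sandwich $\tfrac{n-s}{n}\mathcal{S}^s_C(E)^*\le\mathcal{M}^s_C(E)^*\le\mathcal{S}^s_C(E)^*$ together with the $C$-independence of $\dim_{\mathcal{M}}$) match the paper's intent exactly.
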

\begin{proof}
    We find a sequence $\{r_i\}_{i=1}^{\infty}\in \left(J_f\right)^{\mathbb{N}}$ with $r_i\searrow0$ as $i\to\infty$ (due to Lemma \ref{kneser}) so that \begin{equation*}
        a\coloneq \lim_{i\to\infty}\frac{f^{\prime}(r_i)}{(n-s)r_i^{n-s-1}}=\limsup_{r\to0_+}\frac{f^{+}(r)}{(n-s)r^{n-s-1}}.
    \end{equation*}
    Fix $i\in\mathbb{N}$. For any $r\in [r_{i+1}, r_i]\cap J_f$ we have \begin{equation*}
        \frac{f^{\prime}(r_{i})}{r_{i}^{n-1}}\leq\frac{f^{+}(r)}{r^{n-1}}\leq\frac{f^{\prime}(r_{i+1})}{r_{i+1}^{n-1}}
    \end{equation*} (see \cite{Stachó}, Theorem 1). Now (using Lemma \ref{kneser})\begin{equation}\label{dr}\begin{split}
            f(r_i)-f(0)&=\int\limits_{0}^{r_i}f^{\prime}\dif^1=\sum_{j=i}^{\infty}\;\int\limits_{r_{j+1}}^{r_j}f^{\prime}\dif^1\\&\geq\sum_{j=i}^{\infty}\;\int\limits_{r_{j+1}}^{r_j}f^{\prime}(r_j)\frac{r^{n-1}}{r_j^{n-1}}\dif^1(r)=\sum_{j=i}^{\infty}f^{\prime}(r_j)\frac{r_{j}^n - r_{j+1}^n}{n r_j^{n-1}}\\&=\sum_{j=i}^{\infty}\frac{f^{\prime}(r_j)}{(n-s)r_j^{n-s-1}}\frac{n-s}{n}\frac{r_{j}^n - r_{j+1}^n}{r_j^{s}}\\&\geq\sum_{j=i}^{\infty}\frac{f^{\prime}(r_j)}{(n-s)r_j^{n-s-1}}\frac{n-s}{n}\left(r_{j}^{n-s} - r_{j+1}^{n-s}\right).
    \end{split}
\end{equation}
If $b<a$, then there exists $j_0\in\mathbb{N}$ such that for any $j\geq j_0$ we have $$\frac{f^{\prime}(r_j)}{(n-s)r_j^{n-s-1}}\geq b.$$ Thus, for $i\geq j_0$ we obtain \begin{equation*}\begin{split}
    \frac{f(r_i)-f(0)}{r_i^{n-s}}&\myeqqq\sum_{j=i}^{\infty}\left(\frac{f^{\prime}(r_j)}{(n-s)r_j^{n-s-1}}\frac{n-s}{n}\frac{r_{j}^{n-s} - r_{j+1}^{n-s}}{r_i^{n-s}}\right)\\&\geq b\frac{n-s}{n} \sum_{j=i}^{\infty}\frac{r_{j}^{n-s} - r_{j+1}^{n-s}}{r_i^{n-s}}= b\frac{n-s}{n}\left(1-\frac{\lim_{j\to\infty}r_{j}^{n-s}}{r_i^{n-s}}\right)= b\frac{n-s}{n}.
\end{split}
\end{equation*}
Hence $$\limsup_{r\to0_+} \frac{f(r)-f(0)}{r^{n-s}}\geq b\frac{n-s}{n}.$$ Since $b<a$ was arbitrary, we get the desired inequality. 

The second part of the statement is now clear: set simply $f\coloneq V_{E, C}$.
\end{proof} 
We see that anisotropic upper Minkowski content and anisotropic upper $\mathcal{S}$-content behave asymptotically the same. However, in the case of anisotropic lower Minkowski content and anisotropic lower $\mathcal{S}$-content, the situation is very different. We have only the following inequality:
\begin{theorem}\label{omin}
    Let $E\in\mathcal{K}^n$ and $C\in\mathcal{C}^n_0$. Then for any $s\in[0, n]$ we have $$c(C, n, s)\mathcal{S}_C^{s\frac{n-1}{n}}(E)_*\geq \big(\mathcal{M}_C^{s}(E)_*\big)^{\frac{n-1}{n}},$$ where $c(C, n, s)$ is a positive constant depending only on $C$, $n$ and $s$ (and not on $E$). Consequently, $\dim^C_\mathcal{S}(E)_*\geq \frac{n-1}{n}\dim_\mathcal{M}(E)_*$ provided that $\lambda^n(E)=0$.
\end{theorem}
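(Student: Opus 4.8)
The plan is to obtain the inequality directly from the anisotropic isoperimetric inequality (Theorem~\ref{asas}), applied to every parallel body $E_{r,C}$; unlike the other results of this section, the Kneser property of $V_{E,C}$ plays no role here. Fix $E\in\mathcal{K}^n$, $C\in\mathcal{C}^n_0$ and $s\in[0,n]$, and abbreviate $\sigma\coloneq s\frac{n-1}{n}$, so that $0\le\sigma\le n-1<n$ and $n-\sigma>0$; in particular $\mathcal{S}^\sigma_C(E)_*$ is well defined. For each $r>0$ the set $E_{r,C}=E\oplus rC$ is compact, hence of finite Lebesgue measure, and (as recalled above) $\partial E_{r,C}$ is a finite union of Lipschitz surfaces, so $E_{r,C}$ is a set of finite perimeter. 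Applying Theorem~\ref{asas} to $E_{r,C}$ and using $S_{E,C}(r)=\Per_{h_C}(E_{r,C};\mathbb{R}^n)$ together with $\lambda^n(E_{r,C})=V_{E,C}(r)$, one gets the pointwise bound
\begin{equation}\label{eq:iso-pw}
    S_{E,C}(r)\ \ge\ n\,\lambda^n(C)^{1/n}\,V_{E,C}(r)^{\frac{n-1}{n}},\qquad r>0.
\end{equation}

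Next I would record the elementary exponent identity
\[
    n-\sigma-1\ =\ (n-1)-\frac{s(n-1)}{n}\ =\ \frac{(n-1)(n-s)}{n}\ =\ (n-s)\cdot\frac{n-1}{n},
\]
so that $r^{\,n-\sigma-1}=\bigl(r^{\,n-s}\bigr)^{\frac{n-1}{n}}$ for $r>0$. Dividing \eqref{eq:iso-pw} by $(n-\sigma)\,r^{\,n-\sigma-1}$ and inserting this identity yields, for every $r>0$,
\[
    \frac{S_{E,C}(r)}{(n-\sigma)\,r^{\,n-\sigma-1}}\ \ge\ \frac{n\,\lambda^n(C)^{1/n}}{n-\sigma}\left(\frac{V_{E,C}(r)}{r^{\,n-s}}\right)^{\!\frac{n-1}{n}}.
\]
Passing to $\liminf$ as $r\to0_+$: the left side converges to $\mathcal{S}^\sigma_C(E)_*$ by definition, while on the right side one may pull out the positive constant and use that $t\mapsto t^{(n-1)/n}$ is continuous and non-decreasing on $[0,\infty]$ (so $\liminf$ commutes with it), obtaining $\mathcal{S}^\sigma_C(E)_*\ge\frac{n\lambda^n(C)^{1/n}}{n-\sigma}\bigl(\mathcal{M}^s_C(E)_*\bigr)^{\frac{n-1}{n}}$. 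Rearranging gives the asserted inequality with
\[
    c(C,n,s)\ \coloneq\ \frac{n-\sigma}{n\,\lambda^n(C)^{1/n}}\ =\ \frac{n^2-s(n-1)}{n^2\,\lambda^n(C)^{1/n}}\ >\ 0,
\]
which depends only on $\lambda^n(C)$, $n$ and $s$; the degenerate case $\mathcal{M}^s_C(E)_*=+\infty$ (possible when $\lambda^n(E)>0$ and $s<n$) is covered as well, since then both sides are $+\infty$.

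For the dimensional consequence, assume $\lambda^n(E)=0$ and set $d\coloneq\dim_\mathcal{M}(E)_*$. First I would note the monotonicity of $t\mapsto\mathcal{S}^t_C(E)_*$ with respect to the value $0$: if $\mathcal{S}^t_C(E)_*=0$ and $t<t'<n$, then from $\frac{S_{E,C}(r)}{(n-t')r^{\,n-t'-1}}=\frac{n-t}{n-t'}\,r^{\,t'-t}\,\frac{S_{E,C}(r)}{(n-t)r^{\,n-t-1}}$ and $r^{\,t'-t}\to0$ one gets $\mathcal{S}^{t'}_C(E)_*=0$; hence $\{t:\mathcal{S}^t_C(E)_*=0\}$ is upward closed, so $\mathcal{S}^t_C(E)_*>0$ forces $t\le\dim^C_\mathcal{S}(E)_*$. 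Now for any $s\in[0,d)$ one has $\mathcal{M}^s_C(E)_*>0$ by the definition of $d$, hence $\mathcal{S}^{\,s(n-1)/n}_C(E)_*>0$ by the main inequality, and therefore $s\frac{n-1}{n}\le\dim^C_\mathcal{S}(E)_*$; letting $s\uparrow d$ gives $\dim^C_\mathcal{S}(E)_*\ge\frac{n-1}{n}\dim_\mathcal{M}(E)_*$, and in particular $\dim^C_\mathcal{S}(E)_*$ is bounded below independently of $C$.

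I do not expect a real obstacle: the whole proof rests on applying the anisotropic isoperimetric inequality to each $E_{r,C}$, which couples $S_{E,C}$ and $V_{E,C}$ with exactly the exponent matching the pair $(\sigma,s)$. The only points requiring a little care are the interchange of $\liminf$ with the power $t\mapsto t^{(n-1)/n}$ — routine, by its continuity and monotonicity — and, in the dimension part, the monotonicity of the content in the dimensional parameter, both handled above.
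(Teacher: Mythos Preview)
Your proof is correct and follows essentially the same route as the paper: both apply the anisotropic isoperimetric inequality to each $E_{r,C}$, exploit the exponent identity $n-\sigma-1=(n-s)\tfrac{n-1}{n}$, and arrive at the same constant $c(C,n,s)=\tfrac{n-\sigma}{n\lambda^n(C)^{1/n}}$. The only cosmetic difference is that the paper handles the passage to the limit by choosing a sequence realizing $\mathcal{S}^\sigma_C(E)_*$ (and separating the case $\mathcal{S}^\sigma_C(E)_*=\infty$), whereas you take $\liminf$ directly and push it through the monotone continuous map $t\mapsto t^{(n-1)/n}$; your version is slightly cleaner but not a different idea.
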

\begin{proof}
    Let $r>0$. By the anisotropic isoperimetric inequality (Theorem \ref{asas}) we have \begin{equation}\label{labels}
        n \lambda^n(C)^{\frac{1}{n}}V_{E, C}(r)^{\frac{n-1}{n}}\leq \textup{Per}_{h_{C}}(E_{r, C}; \mathbb{R}^n)=S_{E, C}(r).
    \end{equation} 
    Fix $s\in[0, n]$. For brevity, define $t\coloneq \frac{n-1}{n}s$. If $\mathcal{S}^t_C(E)_*=\infty$, there is nothing to prove. So, let us assume $\mathcal{S}^t_C(E)_*<\infty$. The inequality in (\ref{labels}) gives us \begin{equation*}
          \left(\frac{V_{E, C}(r)}{r^{n-s}}\right)^{\frac{n-1}{n}}\leq \frac{n-t}{n\lambda^n(C)^{\frac{1}{n}}}\frac{S_{E, C}(r)}{(n-t)r^{n-t-1}}.
    \end{equation*} Set $$c(C, n, s)\coloneq \frac{n-t}{n\lambda^n(C)^{\frac{1}{n}}}.$$ We find a sequence $\{r_i\}_{i=1}^{\infty}$ of positive real numbers converging to zero such that $$\lim_{i\to\infty}\frac{S_{E, C}(r_i)}{(n-t)r_i^{n-t-1}}=\mathcal{S}^t_C(E)_*.$$ For any real number $a>\mathcal{S}^t_C(E)_*$ there exists $i_0\in\mathbb{N}$ so that for each $i\geq i_0$ we have $$\frac{S_{E, C}(r_i)}{(n-t)r_i^{n-t-1}}\leq a.$$ Hence also $$ \left(\frac{V_{E, C}(r_i)}{r_i^{n-s}}\right)^{\frac{n-1}{n}}\leq c(C, n, s) a.$$ Thus \begin{equation*}
        \big(\mathcal{M}^s_C(E)_*\big)^{\frac{n-1}{n}}\leq\liminf_{i\to\infty}\left(\frac{V_{E, C}(r_i)}{r_i^{n-s}}\right)^{\frac{n-1}{n}}\leq c(C, n, s) a
    \end{equation*}
 Since $a>\mathcal{S}^t_C(E)_*$ was arbitrary, we obtain exactly $$\big(\mathcal{M}^s_C(E)_*\big)^{\frac{n-1}{n}}\leq c(C, n, s)\mathcal{S}^t_C(E)_*=c(C, n, s)\mathcal{S}_C^{s\frac{n-1}{n}}(E)_*,$$ which completes the proof.
    \end{proof}
\textbf{Summary:} For any compact and $\lambda^n$-negligible set $E\subseteq\mathbb{R}^n$, any convex body of full dimension $C\subseteq\mathbb{R}^n$ with $0\in\textup{int}(C)$, and any $s\in[0, n]$, we have the following inequalities:\begin{gather*}
\mathcal{S}^s_C(E)^*\geq\mathcal{M}^s_C(E)^*\geq\mathcal{M}^s_C(E)_*\geq\mathcal{S}^s_C(E)_*,\\\mathcal{M}^s_C(E)^*\geq\frac{n-s}{n}\mathcal{S}^s_C(E)^*\quad\text{and}\quad c(C, n, s) \mathcal{S}^{s\frac{n-1}{n}}_C(E)_*\geq\big(\mathcal{M}^s_C(E)_*\big)^{\frac{n-1}{n}}.
\end{gather*}We see that the equality $\dim_{\mathcal{M}}(E)^*=\dim^C_{\mathcal{S}}(E)^*$ always holds. However, equality $\dim_{\mathcal{M}}(E)_*=\dim^C_{\mathcal{S}}(E)_*$ does not hold in general. In \cite{Winter} (Theorem 2.2), it is shown that for any $0<s<m<1$ there exists a compact $\lambda^n$-negligible set $E\subseteq\mathbb{R}^n$ that satisfies $\dim^{B(0, 1)}_{\mathcal{S}}(E)_*=s+n-1$ and $\dim_{\mathcal{M}}(E)_*=m+n-1$.

\section{Further {P}roperties of $\mathcal{SM}^s_C$, $\mathcal{M}^s_C$ and $\mathcal{S}^s_C$}
\label{secFurther}
We will show that $\mathcal{SM}_C^s(E)$ and $\mathcal{S}_C^s(E)$, where $E\in\mathcal{K}^n$ with $\lambda^n(E)>0$, are of interest only for $s\in[n-1, n]$.
\begin{proposition}\label{okres}
   Let $E\in\mathcal{K}^n$ with $\lambda^n(E)>0$. Then for any $C\in\mathcal{C}^n_0$\begin{align*}
       \mathcal{S}^{n-1}_C(E)_*\geq n\lambda^n(C)^{\frac{1}{n}}\lambda^n(E)^{\frac{n-1}{n}}&&\text{and}&&\mathcal{SM}^s_C(E)=\mathcal{S}_C^s(E)=\infty,\text{ $s\in[0, n-1)$.}
   \end{align*}
   \begin{proof}
       The first inequality is a straightforward consequence of Theorem \ref{omin}. 
       
       Let us prove the second part. If $n=1$, there is nothing to prove. So, let $n>1$ and fix $s\in[0, n-1)$.  For brevity's sake, put $t\coloneq\frac{sn}{n-1}\in[0, n)$. Observe that from Theorem \ref{omin} we have $$\mathcal{S}^s_C(E)_*\geq c\left(C, n, t\right)^{-1}\big(\mathcal{M}_C^{t}(E)_*\big)^{\frac{n-1}{n}}.$$ However, the assumption $\lambda^n(E)>0$ implies $\mathcal{M}_C^{t}(E)_*=\infty$, whence, taking into account Lemma \ref{lemma16}, we get $\mathcal{S}^s_C(E)=\mathcal{SM}^s_C(E)=\infty$.
   \end{proof}
\end{proposition}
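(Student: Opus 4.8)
The plan is to split the statement into its two assertions and handle them in turn, leaning almost entirely on results already established in Section \ref{secDim}.

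For the first inequality, I would simply invoke Theorem \ref{omin} with $s=n-1$. That gives $c(C,n,n-1)\,\mathcal{S}_C^{(n-1)\frac{n-1}{n}}(E)_*\geq \big(\mathcal{M}_C^{n-1}(E)_*\big)^{\frac{n-1}{n}}$, which is not quite what we want; the cleaner route is to go back one step in that proof, or better, to use the anisotropic isoperimetric inequality directly. Recall from \eqref{labels} that $n\lambda^n(C)^{1/n}V_{E,C}(r)^{\frac{n-1}{n}}\leq S_{E,C}(r)$ for every $r>0$. Dividing by $(n-(n-1))r^{n-(n-1)-1}=r^0=1$ and letting $r\to0_+$ along a sequence realizing the lower limit of the left-hand side, while using $V_{E,C}(r)\to\lambda^n(E)$ by continuity of $V_{E,C}$ at $0$, yields $\mathcal{S}^{n-1}_C(E)_*\geq n\lambda^n(C)^{1/n}\lambda^n(E)^{\frac{n-1}{n}}$. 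This is the content of the first half; no real obstacle here, it is essentially a restatement of the isoperimetric inequality in the limit.

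For the second half, fix $s\in[0,n-1)$ (the case $n=1$ being vacuous since there is no such $s$), and set $t\coloneq \frac{sn}{n-1}$. One checks $t\in[0,n)$: indeed $t\geq0$ is clear and $t<n$ is equivalent to $s<n-1$. Now apply Theorem \ref{omin} with the role of ``$s$'' there played by $t$; since $t\frac{n-1}{n}=s$, the conclusion reads $c(C,n,t)\,\mathcal{S}^s_C(E)_*\geq\big(\mathcal{M}_C^t(E)_*\big)^{\frac{n-1}{n}}$. But $\lambda^n(E)>0$ and $t<n$ force $\mathcal{M}_C^t(E)_*=\infty$ by the estimate $\mathcal{M}^t_{r,C}(E)\geq\lambda^n(E)r^{t-n}\to\infty$ noted right after Lemma \ref{inter}. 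Hence $\mathcal{S}^s_C(E)_*=\infty$, so the limit defining $\mathcal{S}^s_C(E)$ exists in the extended sense and equals $\infty$. Finally, by the first chain of inequalities in Lemma \ref{lemma16}, $\mathcal{SM}^s_C(E)_*\geq\mathcal{S}^s_C(E)_*=\infty$, so $\mathcal{SM}^s_C(E)=\infty$ as well.

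The only point requiring any care is the bookkeeping with the exponents: making sure that $t=\frac{sn}{n-1}$ indeed lands in $[0,n)$ so that Theorem \ref{omin} and the divergence of $\mathcal{M}_C^t(E)_*$ both apply, and that the substitution $t\frac{n-1}{n}=s$ is the correct one to read off $\mathcal{S}^s_C$ rather than $\mathcal{S}^t_C$ from Theorem \ref{omin}. I expect this indexing to be the main (and essentially only) obstacle; everything else is a direct citation of the isoperimetric inequality, Lemma \ref{lemma16}, and the trivial lower bound on Minkowski content for sets of positive measure.
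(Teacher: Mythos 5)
Your proof is correct and follows essentially the same route as the paper: the second half is identical (same substitution $t=\frac{sn}{n-1}\in[0,n)$, the same application of Theorem \ref{omin} with the roles of $s$ and $t$ swapped, divergence of $\mathcal{M}_C^{t}(E)_*$ from $\lambda^n(E)>0$, and the first chain of Lemma \ref{lemma16}), while your direct derivation of the first inequality from (\ref{labels}) is just the $s=n$ instance of Theorem \ref{omin} unwound by hand. The only remark worth making is that the paper's ``straightforward consequence of Theorem \ref{omin}'' is precisely that $s=n$ case --- there $t=n-1$, $c(C,n,n)=\bigl(n\lambda^n(C)^{1/n}\bigr)^{-1}$ and $\mathcal{M}_C^{n}(E)_*=\lambda^n(E)$ by Lemma \ref{inter} --- so the instantiation you first tried ($s=n-1$) was simply the wrong one, not evidence that the theorem fails to apply.
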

We will show what the limit $\lim_{r\to0_+}\kappa_{E, C}(r)$ is and that $\mathcal{S}^0_C(E)$ always exists.
\begin{proposition}
    Let $E\in\mathcal{K}^n$ and $C\in\mathcal{C}^n_0$. Then $$\frac{1}{n}\lim_{r\to0_+}\kappa_{E, C}(r)=\mathcal{S}^0_C(E)=\mathcal{SM}^0_C(E).$$ In particular, if $n>1$, then \begin{align}\label{formula}
        \frac{1}{n}\lim_{r\to0_+}\kappa_{E, C}(r)=\mathcal{H}^0(E)\lambda^n(C).
    \end{align}
    \begin{proof}
        If $E=\emptyset$, there is nothing to prove. So, let us assume that $E\neq\emptyset$. Observe $$\kappa_{E, C}(r)=\frac{S_{E, C}(r)}{r^{n-1}}\quad\text{for any $r>0.$}$$ We know that the function $\kappa_{E, C}(\cdot)$ decreases on the interval $(0, \infty)$ (see Lemma \ref{mnm}). Hence, we get the existence of the limit $$\mathcal{S}^0_{C}(E)=\lim_{r\to0_+}\frac{S_{E, C}(r)}{n r^{n-1}}\in\left[\lambda^n(C), \infty\right].$$ Lemma \ref{lemma16} implies that $\mathcal{S}^0_{C}(E)=\mathcal{SM}^0_{C}(E).$ 
        
        If $n>1$ and $\lambda^n(E)>0$, we have from Proposition \ref{okres} $\mathcal{S}^0_{C}(E)=\mathcal{SM}^0_{C}(E)=\infty$, which is, in particular, equal to $\mathcal{H}^0(E)\lambda^n(C)$. On the other hand, Lemma \ref{inter} gives us $\mathcal{S}^0_{C}(E)=\mathcal{M}^0_{C}(E)=\mathcal{H}^0(E)\lambda^n(C)$ for compacta with zero measure.\end{proof}
\end{proposition}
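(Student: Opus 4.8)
The plan is to obtain both equalities from the monotonicity of $\kappa_{E,C}$ recorded in Lemma \ref{mnm} together with the comparison chain of Lemma \ref{lemma16}, and then to read off the explicit value by splitting according to whether $\lambda^n(E)$ is positive or zero, using Proposition \ref{okres} in the former case and Lemma \ref{inter} in the latter.

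First I would dispose of $E=\emptyset$, where $\kappa_{E,C}\equiv0$, $S_{E,C}\equiv0$ and $\mathcal{S}^0_C(E)=\mathcal{SM}^0_C(E)=\mathcal{H}^0(E)\lambda^n(C)=0$, so everything is trivial; from now on $E\neq\emptyset$. Since $\kappa_{E,C}(r)=S_{E,C}(r)/r^{n-1}$ and $S_{E,C}(\cdot)=V_{E,C}^+(\cdot)$ is right-continuous, $\kappa_{E,C}(\cdot)$ is right-continuous on $(0,\infty)$; Lemma \ref{mnm} moreover makes it non-increasing on $(0,\infty)\setminus J_{E,C}$, and approximating an arbitrary point from the right through points of the co-countable set $(0,\infty)\setminus J_{E,C}$ upgrades $\kappa_{E,C}(\cdot)$ to a non-increasing function on all of $(0,\infty)$. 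Hence $\lim_{r\to0_+}\kappa_{E,C}(r)$ exists in $[n\lambda^n(C),\infty]$, and so $$\mathcal{S}^0_C(E)=\lim_{r\to0_+}\frac{S_{E,C}(r)}{n\,r^{n-1}}=\frac1n\lim_{r\to0_+}\kappa_{E,C}(r)$$ exists. Now I would apply Lemma \ref{lemma16} with $f\coloneq V_{E,C}$ (which is of Kneser type of order $n$) and $h(t)\coloneq t^{n}$, i.e. the case $s=0$; the hypotheses hold because $h(0)=0$ and $h'(t)=n\,t^{n-1}\neq0$ for $t>0$. Its chain $\mathcal{S}^0_C(E)_*\le\mathcal{SM}^0_C(E)_*\le\mathcal{SM}^0_C(E)^*\le\mathcal{S}^0_C(E)^*$ collapses because the two outer terms coincide by the previous step, yielding $\mathcal{SM}^0_C(E)=\mathcal{S}^0_C(E)=\frac1n\lim_{r\to0_+}\kappa_{E,C}(r)$, which is the first assertion (for every $n\ge1$).

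For the ``in particular'' clause assume $n>1$, so that $0\in[0,n-1)$. If $\lambda^n(E)>0$ then $\mathcal{H}^0(E)=\infty$, and Proposition \ref{okres} (with $s=0$) gives $\mathcal{SM}^0_C(E)=\mathcal{S}^0_C(E)=\infty=\mathcal{H}^0(E)\lambda^n(C)$. If $\lambda^n(E)=0$, the last chain of Lemma \ref{lemma16} reads $\mathcal{S}^0_C(E)_*\le\mathcal{M}^0_C(E)_*\le\mathcal{M}^0_C(E)^*\le\mathcal{S}^0_C(E)^*$, which again collapses since $\mathcal{S}^0_C(E)$ exists, so $\mathcal{M}^0_C(E)=\mathcal{S}^0_C(E)$; Lemma \ref{inter} then identifies this common value as $\mathcal{H}^0(E)\lambda^n(C)$, giving (\ref{formula}). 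The only step that is more than bookkeeping with already-established inequalities is the upgrade of the monotonicity of $\kappa_{E,C}$ from $(0,\infty)\setminus J_{E,C}$ to all of $(0,\infty)$; I would carry it out using the right-continuity of $\kappa_{E,C}$ exactly as indicated above.
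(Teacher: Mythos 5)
Your proof is correct and follows essentially the same route as the paper: monotonicity of $\kappa_{E,C}$ from Lemma \ref{mnm} gives the existence of $\mathcal{S}^0_C(E)$, Lemma \ref{lemma16} collapses the chain of inequalities to identify it with $\mathcal{SM}^0_C(E)$ (and with $\mathcal{M}^0_C(E)$ when $\lambda^n(E)=0$), and the explicit value follows from Proposition \ref{okres} or Lemma \ref{inter} according to whether $\lambda^n(E)$ is positive or zero. Your extra step upgrading the monotonicity of $\kappa_{E,C}$ from $(0,\infty)\setminus J_{E,C}$ to all of $(0,\infty)$ via the right-continuity of $S_{E,C}=V^+_{E,C}$ is a detail the paper passes over silently, and is a welcome addition.
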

\begin{rem}
    It is clear that the formula (\ref{formula}) does not hold for $n=1$.
\end{rem}
Let us now prove a generalization of Theorem 3.4 from \cite{Villa2}. Since we work with ``$s\in[0, n]$'', we have to avoid countable $\mathcal{H}^{s}$-rectifiability. It turns out that (under some additional assumptions) the existence of $C$-anisotropic $s$-dimensional Minkowski content of each element of a family of countably many pairwise disjoint compact subsets covering the set up to an $\mathcal{H}^s$-negligible set ensures the existence of the content of the whole set. 
\begin{theorem}
    Let $E\in\mathcal{K}^n$, $C\in\mathcal{C}^n_0$ and $s\in[0, n]$. Suppose that\begin{itemize}
        \item There exists a countable pairwise disjoint family $\{E_k\}_{k\in\mathbb{N}}$ of compact subsets of $E$ such that $\mathcal{M}_C^s(E_k)$ exists for any $k\in\mathbb{N}$ and $\mathcal{H}^s\big( E\setminus \bigcup_{k\in\mathbb{N}}E_k\big)=0$, 
        \item There is a positive Radon measure $\eta$ on $\mathbb{R}^n$ which is absolutely continuous with respect to $\mathcal{H}^s$, so that for some $\gamma>0$ and for each $x\in E$ and any $t\in(0, 1)$ we have $$\eta\big(B(x, t)\big)\geq\gamma t^{s}. $$ Then $\mathcal{M}_C^s(E)$ exists and $$\mathcal{M}_C^s(E)=\sum_{k\in\mathbb{N}}\mathcal{M}_C^s(E_k).$$
    \end{itemize} 
    \begin{proof} We can find positive real numbers $a$ and $b$ such that $B(0, a)\subseteq C\subseteq B(0, b)$. Notice that the cases $s=n$ or $E=\emptyset$ are trivial. Therefore, we can suppose that $s<n$ and $E\neq\emptyset$.
    It suffices to show that \begin{align*}
        \mathcal{M}_C^s(E)_*\geq \sum_{k\in\mathbb{N}}\mathcal{M}_C^s(E_k)&&\text{and}&&\mathcal{M}_C^s(E)^*\leq\sum_{k\in\mathbb{N}}\mathcal{M}_C^s(E_k).
    \end{align*}
    
    Let us start with the first inequality. For any $K\in\mathbb{N}$ and $r>0$ we have the inclusion $$\bigcup_{k=1}^K (E_k\oplus rC)\subseteq E\oplus rC.$$ Hence \begin{equation}\label{nicos}\begin{split}\mathcal{M}_C^s(E)_*&=\liminf_{r\to0_+}\frac{\lambda^n(E\oplus rC)}{r^{n-s}}\geq\liminf_{r\to0_+}\frac{\lambda^n\left(\bigcup_{k=1}^K (E_k\oplus rC)\right)}{r^{n-s}}\\&\geq\sum_{k=1}^K \liminf_{r\to0_+}\frac{\lambda^n(E_k\oplus rC)}{r^{n-s}}=\sum_{k=1}^K \mathcal{M}_C^s(E_k).
        \end{split}
    \end{equation}
    Thus $$\mathcal{M}_C^s(E)_*\geq \sum_{k\in\mathbb{N}}\mathcal{M}_C^s(E_k).$$
 We see that if $E=\bigcup_{k=1}^K E_k$ for some $K\in\mathbb{N}$, then the limit inferior in (\ref{nicos}) is, in fact, the limit. So, we will suppose that $E\setminus \bigcup_{k=1}^K E_k\neq\emptyset$ for each $K\in\mathbb{N}$.

    Let us now prove the second inequality. Fix $\varepsilon\in(0, 1)$. There exists some $K\in\mathbb{N}$ such that $$\eta\left(E\setminus \bigcup_{k=1}^K E_k\right)<\varepsilon.$$ This follows from the fact that $\eta$ is absolutely continuous with respect to $\mathcal{H}^s$ (and from the continuity of $\eta$). Define for $r\in(0, r_{\varepsilon})$ $$E_{\varepsilon, r}\coloneq \left\{x\in E; \textup{dist}\left(x, \bigcup_{k=1}^K E_k\right)\geq 2ar\varepsilon^{1/n}\right\},$$ where $r_{\varepsilon}<a^{-1}\varepsilon^{-\frac{1}{n}}$ and $\eta\Big((E\oplus r_{\varepsilon}\varepsilon^{1/n}C)\setminus \bigcup_{k=1}^K E_k\Big)<\varepsilon$ as follows, again, from the continuity of $\eta$.
    
    We can cover the compact set $E_{\varepsilon, r}$ by a family of balls $\left\{B(i, \varepsilon, r)\right\}_{i\in I_{\varepsilon, r}}$, where $B(i, \varepsilon, r)\coloneq B\left(x_i, a r\varepsilon^{1/n}\right)$ and $x_i\in E_{\varepsilon, r}$, for all $i\in I_{\varepsilon, r}$, such that for some $N\in\mathbb{N}$ \begin{equation}\label{bez}
        \begin{split}
            \sum_{i\in I_{\varepsilon, r}} \eta\big(B(i, \varepsilon, r)\big)\leq N \eta\left(\left(E\oplus r\varepsilon^{1/n} C\right)\setminus \bigcup_{k=1}^K E_k \right).
        \end{split}
    \end{equation}
    The existence of such a family of balls can be justified as follows: We apply Theorem \ref{bes} on the compact set $E_{\varepsilon, r}$ and the constant function $x\mapsto ar\varepsilon^{1/n}$ on $E_{\varepsilon, r}$ and we get some $N\in\mathbb{N}$ and a family $\left\{B(i, \varepsilon, r)\right\}_{i\in I_{\varepsilon, r}}$ consisting of balls centered at some point of $E_{\varepsilon, r}$ such that \begin{align}\label{ded}
        E_{\varepsilon, r}\subseteq\bigcup_{i\in I_{\varepsilon, r}}B(i, \varepsilon, r)&&\text{and}&& \sum_{i\in I_{\varepsilon, r}}\chi_{B(i, \varepsilon, r)}\leq N.
    \end{align} Observe that the definition of the set $E_{\varepsilon, r}$ and the inclusion $B(0, a)\subseteq C$ ensure $$\bigcup_{i\in I_{\varepsilon, r}}B(i, \varepsilon, r)\subseteq \left(E_{\varepsilon, r}\oplus B\left(0, a r\varepsilon^{1/n}\right)\right)\setminus\bigcup_{k=1}^K E_k\subseteq \left(E\oplus r\varepsilon^{1/n}C\right)\setminus\bigcup_{k=1}^K E_k.  $$
    By integrating the second item of (\ref{ded}) with respect to the measure $\eta$ over the set $\left(E\oplus r\varepsilon^{1/n} C\right)\setminus \bigcup_{k=1}^K E_k$ we obtain exactly (\ref{bez}).
    
       By the assumption on $\eta$ we get \begin{equation*}\begin{split}
            \gamma a^s r^s\varepsilon^{\frac{s}{n}}\mathcal{H}^0(I_{\varepsilon, r})&=\sum_{i\in I_{\varepsilon, r}} \gamma\left( a r\varepsilon^{1/n}\right)^s\leq\sum_{i\in I_{\varepsilon, r}} \eta\Big(B\left(x_i, a r\varepsilon^{1/n}\right)\Big)\\&\weq N \eta\Big(\left(E\oplus r\varepsilon^{1/n} C\right)\setminus \bigcup_{k=1}^K E_k \Big)\leq N\varepsilon.
        \end{split}
    \end{equation*}
Rearrangement of the terms gives us the following estimate \begin{equation}\label{haus}
    \mathcal{H}^0(I_{\varepsilon, r})\leq\frac{N\varepsilon^{\frac{n-s}{n}}}{\gamma a^s r^s}.
\end{equation}
Observe that since $C\subseteq B(0, b)$,\begin{equation*}\begin{split}
        E_{\varepsilon, r}\oplus r C&\subseteq\bigcup_{i\in I_{\varepsilon, r}} B\big(x_i, a r\varepsilon^{1/n}\big)\oplus B(0, br)\\&\subseteq\bigcup_{i\in I_{\varepsilon, r}} B\big(x_i, b r(1+\varepsilon^{1/n})\big).
\end{split}
\end{equation*}
Therefore, \begin{equation}\label{kik}
    \begin{split}
        \lambda^n\big(E_{\varepsilon, r}\oplus rC\big)&\leq\sum_{i\in I_{\varepsilon, r}} \lambda^n\Big(B\big(x_i, b r(1+\varepsilon^{1/n})\big)\Big)\\&=\mathcal{H}^0(I_{\varepsilon, r})\omega_n b^n r^n (1+\varepsilon^{1/n})^n\\&\myequh\omega_n b^n r^n (1+\varepsilon^{1/n})^n\frac{N\varepsilon^{\frac{n-s}{n}}}{\gamma a^s r^s}\\&\leq r^{n-s}\frac{N\omega_n b^n 2^n\varepsilon^{\frac{n-s}{n}}}{\gamma a^s}.
    \end{split}
\end{equation}
We have the following chain of inclusions:
\begin{equation}\label{incl}
    \begin{split}
        E\oplus rC&\subseteq \left(E\setminus E_{\varepsilon, r}\right)\oplus rC\cup \left( E_{\varepsilon, r}\oplus rC\right)\\&\subseteq \bigcup_{k=1}^K\left(E_k\oplus 2ar\varepsilon^{1/n}C\oplus rC\right)\cup(E_{\varepsilon, r}\oplus rC),
    \end{split}
\end{equation}
where we used that $$E\setminus E_{\varepsilon, r}\subseteq\left\{x\in E : \dist\Big(x, \bigcup_{k=1}^K E_k\Big)\leq 2ar\varepsilon^{1/n}\right\}.$$ 
Finally, \begin{equation*}
    \begin{split}
        \mathcal{M}_C^s(E)^*&=\limsup_{r\to0_+}\frac{\lambda^n(E\oplus rC)}{r^{n-s}}\\&\myequuh\limsup_{r\to0_+}\frac{\lambda^n(E_{\varepsilon, r}\oplus rC)}{r^{n-s}}+\sum_{k=1}^K\limsup_{r\to0_+}\frac{\lambda^n\big(E_k\oplus r(1+2a\varepsilon^{1/n}) C\big)}{r^{n-s}}\\&\myequuuh \frac{N\omega_n b^n 2^n\varepsilon^{\frac{n-s}{n}}}{\gamma a^s}+(1+2a\varepsilon^{1/n})^{n-s}\sum_{k=1}^K\mathcal{M}_C^s(E_k)\\&\leq \frac{N\omega_n b^n 2^n\varepsilon^{\frac{n-s}{n}}}{\gamma a^s}+(1+2a\varepsilon^{1/n})^{n-s}\sum_{k\in\mathbb{N}}\mathcal{M}_C^s(E_k).
    \end{split}
\end{equation*}
By sending $\varepsilon\searrow0$, we get the desired inequality $$\mathcal{M}_C^s(E)^*\leq\sum_{k\in\mathbb{N}}\mathcal{M}_C^s(E_k),$$ which completes the proof.\end{proof}
\end{theorem}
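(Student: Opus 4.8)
The plan is to first dispose of the trivial cases $s=n$ (where $\mathcal{M}^n_C(\cdot)=\lambda^n(\cdot)$ by Lemma~\ref{inter} and $E\setminus\bigcup_kE_k$ is $\lambda^n$-negligible, so both sides reduce to $\sum_k\lambda^n(E_k)$) and $E=\emptyset$, and then, fixing $a,b>0$ with $B(0,a)\subseteq C\subseteq B(0,b)$ throughout, to establish the two one-sided estimates $\mathcal{M}^s_C(E)_*\ge\sum_{k\in\mathbb{N}}\mathcal{M}^s_C(E_k)$ and $\mathcal{M}^s_C(E)^*\le\sum_{k\in\mathbb{N}}\mathcal{M}^s_C(E_k)$. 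The lower one is soft: for each fixed $K$ the compact sets $E_1,\dots,E_K$ are pairwise disjoint, hence at positive mutual distance, so for all sufficiently small $r>0$ the dilations $E_k\oplus rC$ ($k\le K$) are pairwise disjoint, and combining this with $\bigcup_{k\le K}(E_k\oplus rC)\subseteq E\oplus rC$ gives $\lambda^n(E\oplus rC)\ge\sum_{k\le K}\lambda^n(E_k\oplus rC)$; dividing by $r^{n-s}$, taking $\liminf_{r\to0_+}$, and using that every $\mathcal{M}^s_C(E_k)$ exists yields $\mathcal{M}^s_C(E)_*\ge\sum_{k\le K}\mathcal{M}^s_C(E_k)$, whence the claim on letting $K\to\infty$.

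The upper estimate is the substantial one, and it is where the hypotheses on $\eta$ come in. Fix $\varepsilon\in(0,1)$. Because $\eta\ll\mathcal{H}^s$ and $\mathcal{H}^s(E\setminus\bigcup_kE_k)=0$, continuity of $\eta$ yields $K\in\mathbb{N}$ with $\eta(E\setminus\bigcup_{k\le K}E_k)<\varepsilon$, and then a threshold $r_\varepsilon>0$, also satisfying $ar_\varepsilon\varepsilon^{1/n}<1$, such that $\eta\bigl((E\oplus r\varepsilon^{1/n}C)\setminus\bigcup_{k\le K}E_k\bigr)<\varepsilon$ for all $r\in(0,r_\varepsilon)$. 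Writing $E_{\varepsilon,r}$ for the set of points of $E$ at Euclidean distance $\ge 2ar\varepsilon^{1/n}$ from $\bigcup_{k\le K}E_k$, I would rely on the inclusion
\[
E\oplus rC\ \subseteq\ \bigl(E_{\varepsilon,r}\oplus rC\bigr)\ \cup\ \bigcup_{k\le K}\bigl(E_k\oplus(1+2\varepsilon^{1/n})rC\bigr),
\]
valid because a point of $E$ either lies in $E_{\varepsilon,r}$, or lies within $2ar\varepsilon^{1/n}$ of some $E_k$, and then its $rC$-dilate is absorbed into $E_k\oplus(1+2\varepsilon^{1/n})rC$ since $B(0,2ar\varepsilon^{1/n})\subseteq 2\varepsilon^{1/n}rC$ and $C$ is convex. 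To bound the first term I would apply the Besicovitch covering theorem (Theorem~\ref{bes}) to $E_{\varepsilon,r}$ with the constant radius $ar\varepsilon^{1/n}$, obtaining balls $B(x_i,ar\varepsilon^{1/n})$, $i\in I_{\varepsilon,r}$, of overlap at most a dimensional constant $N$, each of which---thanks to the factor $2$ in the threshold---misses $\bigcup_{k\le K}E_k$ and sits inside $(E\oplus r\varepsilon^{1/n}C)\setminus\bigcup_{k\le K}E_k$. Integrating the overlap inequality against $\eta$ over this last set and invoking the density hypothesis $\eta(B(x_i,ar\varepsilon^{1/n}))\ge\gamma(ar\varepsilon^{1/n})^s$ bounds the cardinality of $I_{\varepsilon,r}$ by a constant multiple of $\varepsilon^{(n-s)/n}/r^s$; enlarging each ball to one of radius $\le 2br$ containing its $rC$-dilate and summing volumes then gives $\lambda^n(E_{\varepsilon,r}\oplus rC)\le C'\varepsilon^{(n-s)/n}r^{n-s}$ with $C'$ independent of $r$ and $\varepsilon$. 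Dividing the displayed inclusion by $r^{n-s}$, passing to $\limsup_{r\to0_+}$ (noting that $\lambda^n(E_k\oplus(1+2\varepsilon^{1/n})rC)/r^{n-s}\to(1+2\varepsilon^{1/n})^{n-s}\mathcal{M}^s_C(E_k)$ by rescaling), and finally letting $\varepsilon\searrow0$ gives $\mathcal{M}^s_C(E)^*\le\sum_{k\in\mathbb{N}}\mathcal{M}^s_C(E_k)$.

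The one real obstacle is this covering estimate. The leftover $E\setminus\bigcup_{k\le K}E_k$ has no structure beyond being $\mathcal{H}^s$-small, so the measure of its $rC$-dilate cannot be controlled directly; the hypothesis on $\eta$ is present precisely in order to convert that smallness---through the uniform lower density $\gamma t^s$ and the dimension-only overlap constant of Besicovitch---into a genuine bound on the number of $ar\varepsilon^{1/n}$-balls needed to cover it, and the delicate part is the bookkeeping that makes the powers of $r$ and of $\varepsilon$ combine into the harmless order $r^{n-s}\varepsilon^{(n-s)/n}$, which vanishes in the limit. The surrounding steps---the inclusions, the monotonicity of $r\mapsto\lambda^n(E\oplus rC)$, and the rescalings of $\mathcal{M}^s_{r,C}$---are routine.
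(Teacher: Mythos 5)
Your argument is correct and follows essentially the same route as the paper: the same soft lower bound via eventual disjointness of the $E_k\oplus rC$, and the same upper bound via the cut-off set $E_{\varepsilon,r}$ at distance $2ar\varepsilon^{1/n}$, the Besicovitch covering theorem, and the $\eta$-density count yielding the $r^{n-s}\varepsilon^{(n-s)/n}$ bound. The only (cosmetic, and in fact slightly cleaner) deviation is that your key inclusion places the $(1+2\varepsilon^{1/n})$-dilation on the $E_k\oplus rC$ terms rather than on $E_{\varepsilon,r}\oplus rC$, which you then remove by rescaling before letting $\varepsilon\searrow0$.
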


\end{document}